\newcommand{\bburl}[1]{\textcolor{blue}{\url{#1}}}
\newcommand{\burl}[1]{\textcolor{blue}{\url{#1}}}
\numberwithin{equation}{section}
\newtheorem{thm}{Theorem}[section]
\newtheorem{defi}[thm]{Definition}
\newtheorem{rem}[thm]{Remark}
\theoremstyle{plain}
\newtheorem{definition}[thm]{Definition}
\newtheorem{lemma}[thm]{Lemma}
\theoremstyle{definition}
\newcommand\be{\begin{equation}}
\newcommand\ee{\end{equation}}
\newcommand\bee{\begin{equation*}}
\newcommand\eee{\end{equation*}}
\newcommand\bea{\begin{eqnarray}}
\newcommand\eea{\end{eqnarray}}
\newcommand\beae{\begin{eqnarray*}}
\newcommand\eeae{\end{eqnarray*}}
\newcommand\bi{\begin{itemize}}
\newcommand\ei{\end{itemize}}
\newcommand\ben{\begin{enumerate}}
\newcommand\een{\end{enumerate}}
\newcommand\bc{\begin{center}}
\newcommand\ec{\end{center}}
\newcommand\ba{\begin{array}}
\newcommand\ea{\end{array}}
\newcommand{\B}{\mathcal{B}}
\newcommand{\Z}{\ensuremath{\mathbb{Z}}}
\newcommand{\ep}{\varepsilon}
\newcommand\frakfamily{\usefont{U}{yfrak}{m}{n}}
\DeclareTextFontCommand{\textfrak}{\frakfamily}
\newcommand{\hr}[1]{\href{#1}{\url{#1}}}
\DeclareMathOperator{\Var}{{Var}}
\DeclareMathOperator{\Cov}{{Cov}}
\DeclareMathOperator{\E}{\mathbb{E}}
\title{Hyper-bishops, Hyper-rooks, and Hyper-queens: Percentage of Safe Squares on Higher Dimensional Chess Boards}
\author{Caroline Cashman}
\email{\textcolor{blue}{\href{mailto:cecashman@wm.edu}{cecashman@wm.edu}}}
\address{Department of Mathematics, College of William \& Mary, Willliamsburg, VA}
\author{Joseph Cooper}
\email{\textcolor{blue}{\href{mailto:jc2407@cam.ac.uk}{jc2407@cam.ac.uk}}}
\address{Department of Mathematics, University of Cambridge, UK}
\author{Raul Marquez}
\email{\textcolor{blue}{\href{mailto:raul.marq@yahoo.com}{raul.marq@yahoo.com}}}
\address{Department of Mathematics, University of Texas Rio Grande Valley, Edinburg, TX}
\author[Miller]{Steven J. Miller}
\email{\textcolor{blue}{\href{mailto:sjm1@williams.edu}{sjm1@williams.edu}},  \textcolor{blue}{\href{Steven.Miller.MC.96@aya.yale.edu}{Steven.Miller.MC.96@aya.yale.edu}}}
\address{Department of Mathematics and Statistics, Williams College, Williamstown, MA} 
\author{Jenna Shuffelton}
\email{\textcolor{blue}{\href{mailto:jms13@williams.edu}{jms13@williams.edu}}}
\address{Department of Mathematics and Statistics, Williams College, Williamstown, MA}
\thanks{The authors were supported by Emmanuel College Cambridge, The William \& Mary Charles Center, The Cissy Patterson Fund, Williams College, The Finnerty Fund, and NSF Grant DMS2241623. Special thanks to Warren Johnson for helpful comments on an earlier draft and to the referees for many helpful comments.}
\date{\today}
\begin{document}

\begin{abstract} Chess has inspired an abundance of mathematical problems, especially in combinatorics and probability. One such problem, initially studied by Miller, Sheng and Turek, considers the proportion of safe spaces when randomly placing $n$ rooks on an $n \times n$ chess board. They show that  as $n$ approaches infinity, the proportion of safe spaces converges to $1/e^2$.
We first generalize their results to bishops and queens. This problem is significantly more interesting and difficult; while a rook attacks the same number of spaces regardless of its position, this is not so for bishops and queens. We prove that the proportion of safe spaces on an $n \times n$ board with $n$ randomly placed bishops converges to $2/e^2$, while it converges to $2/e^4$ for $n$ randomly placed queens. 
We then extend this result to the $k$-dimensional chessboard and consider two natural extensions. We define line-pieces to attack along lines, and define hyper-pieces attack along $(k-1)$-dimensional hyper-planes. We provide exact results for line-rooks and hyper-rooks for arbitrary $k$, and give bounds and initial observations for line-bishops, line-queens, hyper-bishops and hyper-queens.
\end{abstract}

\maketitle
\vspace{-.67cm}


\newpage

\section{Introduction}

Chess is a deep well of ideas for mathematical problems, inspiring entire textbooks such as \cite{Wat04} and \cite{Pet11}. A particularly famous problem is the $n$ queens problem, originally proposed by Bezzel under the pen-name ``Schadenfreude" in \cite{Bez48}, which considers the number of ways to place $n$ queens on an $n \times n$ board so that no queen attacks another. This problem has attracted substantial interest, and a survey on related results can be found in \cite{BS09}. While the classical problem counts the number of ways to place non-attacking queens, one natural extension is to consider the maximum number of safe spaces when placing $n$ queens; here the answer is only known for small $n$, as shown in \cite{LV11}. The dominating queens problem asks yet another interesting question, by considering the minimum number of queens required to cover an $n \times n$ chess board.\par
\vspace{.25cm}
Inspired by these problems, Miller, Sheng, and Turek showed in \cite{MST21} that for an $n \times n$ board with $n$ randomly placed rooks, the percentage of safe spaces converges to $1/e^2$ as $n\to \infty$.  Inspired by this paper, we adopt their techniques to several generalizations of the problem, which we outline in Figure \ref{fig:outlineflowchart}. We begin by restating some of their key definitions and notation.
\begin{figure}[h]
    \centering
 \begin{tikzpicture}[scale=.5, every node/.style={rectangle, rounded corners, minimum width=2.5cm, minimum height=2.5cm,text centered, text width=2.5cm, draw=black}]
\node (MST) at (-10,2.5) {\textbf{Miller et. al:} Rooks in 2 Dimensions};
\node (Lem21) at (-3.3,2.5){\textbf{Section 2:} Combinatorial Generalization};
\node (BQ2) at (3.3,2.5){\textbf{Section 3:} Bishops and Queens in 2 Dimensions};
\node (Hyper) at (10,5.75){\textbf{Section 4: } Hyper-pieces in Higher Dimensions};
\node (Line) at (10,-.75){\textbf{Section 5:} Line-pieces in Higher Dimensions};
\draw [->, thick] (MST) -- (Lem21);
\draw [->, thick] (Lem21) -- (BQ2);
\draw [->, thick] (BQ2) -- (Hyper);
\draw [->, thick] (BQ2) -- (Line);
\end{tikzpicture}
\caption{Outline of the Generalizations of \cite{MST21} we present in this paper.}
\label{fig:outlineflowchart}
\end{figure}
\begin{definition} \label{mainnotations}
    A board configuration, denoted $\B$, is a choice of placements of attacking pieces. For a board configuration on a $k$-dimensional chessboard, we define the binary indicator variable $X_{x_1,\ldots,x_k}$ by
    \begin{equation}
        X_{x_1,\ldots,x_k}(\B) \ := \
        \begin{cases}
            1 & (x_1,\ldots,x_k) \;\text{\normalfont is safe under } \B \\
            0 & \text{\normalfont otherwise.}
        \end{cases}
    \end{equation}

 On a $k$-dimensional board with side length $n$, we denote the number of safe spaces on the board configuration $\B$ by $S_{n,k}(\B)$, so
\begin{equation}
    S_{n,k}(\B) \ := \ \sum^n_{x_1, \ldots, x_k = 1} X_{x_1, \ldots, x_k}(\B), \\
\end{equation}
and, using $\E$ to denote the expected value function, 
\begin{equation}
    \E[S_{n,k}] \ = \ \sum^n_{x_1, \ldots, x_k = 1} \E[X_{x_1, \ldots, x_k}(\B)].
\end{equation}\\
\vspace{.1cm}\\
Lastly, we define
\begin{equation}
    \mu_{n,k} \ := \ \frac{1}{n^k}\sum^n_{x_1, \ldots, x_k = 1} \E[X_{x_1, \ldots, x_k}(\B)],
\end{equation}\\
\vspace{.1cm}\\
so $\mu_{n,k}$ is the expected proportion of safe spaces on the board. We omit $k$ in the standard case when $k=2$.
\end{definition}
We first extend the work of Miller, Sheng, and Turek to bishops and queens in $2$ dimensions, and determine the expected percentage of safe spaces. Notably, the problem becomes much more interesting here. While a rook attacks the same number of spaces at any position on the board, bishops and queens are able to attack more spaces when they are placed near the center. As an illustration, note that on an $n \times n$ board, with $n$ odd, a bishop in the center attacks $2(n-1)$ spaces while a bishop placed on the edge attacks $n-1$ spaces. Additionally, the probability of being placed on a space near the center of the board differs from the probability of being placed on an outer space. This leads us to our first two results. \par
\vspace{.25cm}

\begin{thm}\label{bishopsAvg}
    As $n$ approaches infinity, the mean number of safe spaces on an $n \times n$ chessboard with $n$ randomly placed bishops is asymptotically $2n^2/e^2$, and the expected proportion of safe spaces converges to $2/e^2$.
\end{thm}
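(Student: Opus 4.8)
The plan is to reduce everything to the expectation $\E[X_{x,y}]$ of a single indicator and then pass to a Riemann integral. Modeling the $n$ bishops as placed independently and uniformly among the $n^2$ squares, a fixed square $(x,y)$ is attacked by a given bishop precisely when that bishop lands on one of the two diagonals through $(x,y)$ other than $(x,y)$ itself. Writing $d(x,y)$ for the number of such attacking squares and using independence,
\be
\E[X_{x,y}] \ = \ \left(1 - \frac{d(x,y)}{n^2}\right)^n.
\ee
The first task is to compute $d(x,y)$. Since the two diagonals meet only at $(x,y)$, we have $d(x,y) = L_1(x,y) + L_2(x,y) - 2$, where $L_1$ and $L_2$ are the lengths of the anti-diagonal $\{x'+y' = x+y\}$ and the main diagonal $\{x'-y' = x-y\}$ inside the board. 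A short count gives $L_1 = \min(x+y-1,\, 2n+1-x-y)$ and $L_2 = n - |x-y|$, capturing the phenomenon highlighted before the theorem: unlike a rook, a bishop attacks a square-dependent number of squares.

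The second step is the asymptotic analysis. Rescaling by $x = un$, $y = vn$ with $(u,v) \in [0,1]^2$, the two lengths satisfy $L_1/n \to 1 - |u+v-1|$ and $L_2/n \to 1 - |u-v|$, so that $d(x,y)/n \to g(u,v)$, where
\be
g(u,v) \ := \ 2 - |u+v-1| - |u-v|.
\ee
Because $d(x,y) = O(n)$, the quantity $d/n^2 = O(1/n)$ is small and $\left(1 - d/n^2\right)^n \to e^{-g(u,v)}$ pointwise. Recognizing $\mu_n = \frac{1}{n^2}\sum_{x,y}\E[X_{x,y}]$ as a Riemann sum over the grid $\{(x/n, y/n)\}$, I would conclude
\be
\mu_n \ \longrightarrow \ \int_0^1\int_0^1 e^{-g(u,v)}\,du\,dv.
\ee

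The third step is to evaluate this integral and show it equals $2/e^2$. The natural move is the diagonal change of variables $s = u+v-1$, $t = u-v$, which has Jacobian $du\,dv = \tfrac12\,ds\,dt$ and carries the unit square onto the diamond $|s|+|t|\le 1$, while turning $g$ into $2 - |s| - |t|$. Hence
\be
\int_0^1\int_0^1 e^{-g}\,du\,dv \ = \ \frac{e^{-2}}{2}\int\int_{|s|+|t|\le 1} e^{|s|+|t|}\,ds\,dt \ = \ 2e^{-2},
\ee
where the last equality uses the fourfold symmetry of the integrand together with $\int_0^1\int_0^{1-s}e^{s+t}\,dt\,ds = 1$. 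This gives $\mu_n \to 2/e^2$ and $\E[S_n] = n^2\mu_n \sim 2n^2/e^2$.

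The main obstacle is rigor in the limit interchange rather than the computations above. I would need to justify replacing the sum by the integral uniformly, controlling the contribution of squares near the edges and corners, where $d(x,y)$ is small and hence $\E[X_{x,y}]$ is largest; since each term lies in $[0,1]$ and $g$ is continuous and bounded, bounded convergence applied to the Riemann sum should suffice, provided the boundary cells are shown to contribute negligibly. A secondary point is to confirm that the independent-placement model is the intended one and that alternative conventions (distinct squares, or whether an occupied square counts as safe) change $\E[X_{x,y}]$ by only $O(1/n)$, so the $2/e^2$ limit is unaffected; the color and parity structure of the diagonals needs no special treatment, since $d(x,y)$ already counts exactly the attacking squares.
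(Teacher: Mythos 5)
Your proof is correct, but it takes a genuinely different route from the paper's. The paper works inside the exact combinatorial model ($n$ bishops on distinct squares, uniform over $\binom{n^2}{n}$ configurations), organizes the board into concentric rings on which the attack count $2n-2r-1$ is constant, writes $\mu_n$ as $\frac{1}{n^2}\binom{n^2-2n+1}{n}/\binom{n^2}{n}+\sum_{r}\frac{8r}{n^2}\binom{n^2-2n+2r+1}{n}/\binom{n^2}{n}$, and then squeezes the resulting products between two arithmetic--geometric series that are summed in closed form (Lemmas \ref{bishlim1} and \ref{bishlim2}). Your ring-free count $d(x,y)=L_1+L_2-2$ encodes the same geometry, since $|u+v-1|+|u-v|=2\max\bigl(|u-\tfrac12|,|v-\tfrac12|\bigr)$ is exactly the continuum limit of the paper's ring index. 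What you do differently: you adopt independent placement, so the per-square probability is the clean $(1-d/n^2)^n$, pass to the scaling limit $e^{-g(u,v)}$, and evaluate $\int_0^1\int_0^1 e^{-g}\,du\,dv$ by the rotation $s=u+v-1$, $t=u-v$; all of this checks out, as $\tfrac{e^{-2}}{2}\cdot 4\cdot\int_0^1\int_0^{1-s}e^{s+t}\,dt\,ds=2e^{-2}$. Your approach buys brevity and generality: the limit interchange you flag as the main obstacle is easily made rigorous, since $d\le 2n$ gives $(1-d/n^2)^n=e^{-g(x/n,y/n)}\bigl(1+O(1/n)\bigr)$ uniformly over the board, so viewing $\mu_n$ as the integral of step functions bounded by $1$ and applying dominated convergence settles it, with no special treatment of edge squares needed; moreover the same template gives queens instantly ($g\mapsto 2+g$, yielding $2/e^4$) and adapts to any piece whose attack count is $n\,g(x/n,y/n)+O(1)$. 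What the paper's approach buys: it never leaves the distinct-placement model actually being studied, and its workhorse, Lemma \ref{lemma: comboliminf}, is reused for the variance theorem and the higher-dimensional results. In your write-up the model discrepancy is the one real loose end, though you flag it and it closes easily: for $d=O(n)$ one has $\binom{n^2-d-1}{n}/\binom{n^2}{n}=\prod_{i=0}^{n-1}\bigl(1-\tfrac{d+1}{n^2-i}\bigr)=(1-d/n^2)^n\bigl(1+o(1)\bigr)$ uniformly, so the hypergeometric and binomial models share the limit $2/e^2$.
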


\vspace{.10cm}

\begin{thm}\label{queensAvg}
    As $n$ approaches infinity, the mean number of safe spaces on an $n \times n$ chessboard with $n$ randomly placed queens is asymptotically $2n^2/e^4$, and the expected proportion of safe spaces converges to $2/e^4$.
\end{thm}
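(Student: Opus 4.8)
The plan is to exploit the fact that a queen's move is the union of a rook's move and a bishop's move, so that a square is safe exactly when it is simultaneously unattacked by the rook-component and the bishop-component of every queen. I would place the $n$ queens independently and uniformly among the $n^2$ squares, matching the models used for rooks and bishops. Fix a square $(x,y)$ and let $d_1(x,y)$ and $d_2(x,y)$ denote the numbers of other squares lying on its two diagonals. The first key step is to observe that the squares a queen at $(x,y)$ attacks along its row and column meet the squares it attacks along its diagonals only in $(x,y)$ itself: any point $(x+t,y\pm t)$ lies in row $x$ or column $y$ only when $t=0$. Hence the number of squares (other than $(x,y)$) that a queen placed on $(x,y)$ would need to avoid is exactly $(2n-2) + d_1(x,y) + d_2(x,y)$, with no inclusion--exclusion correction.

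From here, by independence,
\begin{equation}
\E[X_{x,y}(\B)] \ = \ \left(1 - \frac{(2n-2)+d_1(x,y)+d_2(x,y)}{n^2}\right)^{\!n}.
\end{equation}
The second step is to show this is asymptotically $e^{-(2n-2)/n}\, e^{-(d_1(x,y)+d_2(x,y))/n}$ uniformly in $(x,y)$. Writing the base as $1-u$ with $u = u_{x,y}$, I note that $0 \le u \le 4/n$ for every square (rows and columns contribute $2n-2$, the diagonals at most $2n-2$ more), so $u \to 0$ uniformly; then $(1-u)^n = \exp(-nu + O(nu^2))$ with $nu^2 = O(1/n) \to 0$ uniformly. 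This is the only genuinely delicate estimate, since the diagonal counts $d_1,d_2$ depend on position and behave differently near the boundary, but the crude uniform bound $u\le 4/n$ controls the error term everywhere at once.

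The third step is to separate the row/column contribution from the diagonal contribution. Since every square has exactly $2n-2$ other squares in its row and column, the factor $e^{-(2n-2)/n}$ is independent of $(x,y)$ and pulls out of the average:
\begin{equation}
\mu_n \ = \ \frac{1}{n^2}\sum_{x,y} \E[X_{x,y}(\B)] \ \sim \ e^{-(2n-2)/n}\cdot \frac{1}{n^2}\sum_{x,y} e^{-(d_1(x,y)+d_2(x,y))/n}.
\end{equation}
As $n\to\infty$ the prefactor tends to $e^{-2}$, the contribution of the rook component, exactly as in the Miller--Sheng--Turek result. The remaining normalized sum is precisely the quantity analyzed in the proof of Theorem \ref{bishopsAvg}: it is the board-average of the single-bishop survival probability, which converges to $2/e^2$. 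Multiplying the two limits gives $\mu_n \to e^{-2}\cdot 2/e^2 = 2/e^4$, and $\E[S_n] = n^2\mu_n \sim 2n^2/e^4$.

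The main obstacle is making the factorization rigorous: I must confirm that the diagonal sum appearing after pulling out the rook factor is genuinely identical (up to the uniform $o(1)$ error) to the sum whose limit $2/e^2$ was established for bishops, so that Theorem \ref{bishopsAvg} can be invoked as a black box rather than re-derived. I would therefore isolate the bishop result in the intermediate form $\frac{1}{n^2}\sum_{x,y} e^{-(d_1+d_2)/n} \to 2/e^2$ and cite that step directly, after verifying that the uniform approximation of the previous paragraph permits replacing $\E[X_{x,y}(\B)]$ by the product $e^{-(2n-2)/n}e^{-(d_1+d_2)/n}$ inside the average with only a vanishing error.
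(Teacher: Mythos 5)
Your overall skeleton coincides with the paper's: both arguments rest on the observation that the rook-type and bishop-type attack sets of a queen meet only at the queen's own square, and both reduce the queen count to the bishop count carrying an extra factor of $e^{-2}$ from the row/column squares. The genuine gap is the probability model. In the paper a board configuration is a choice of $n$ \emph{distinct} squares, so every safety probability is a hypergeometric ratio such as $\binom{n^2-4n+2r+1}{n}\big/\binom{n^2}{n}$; the queens' positions are \emph{not} independent, and your identity $\E[X_{x,y}(\B)] = \bigl(1-((2n-2)+d_1(x,y)+d_2(x,y))/n^2\bigr)^n$, which is exact only for placement with replacement, is false in that model. Your assertion that independent placement ``matches the models used for rooks and bishops'' is therefore incorrect, and as written your argument proves a statement about a different random experiment. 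The gap is repairable: writing $A$ for the number of squares attacked or occupied, the paper's probability equals $\prod_{i=0}^{A-1}\bigl(1-\frac{n}{n^2-i}\bigr)$, and since $A = O(n)$ uniformly over the board, the product bounds in the proof of Lemma \ref{lemma: comboliminf} give $e^{-A/n}(1+o(1))$ uniformly, after which your factorization into $e^{-2}$ times the bishop average goes through. But that equivalence must be proved, not asserted.

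A second, smaller issue is the black-box citation you plan. The paper's proof of Theorem \ref{bishopsAvg} establishes the limit of the ring sum $\sum_r \frac{8r}{n^2}\binom{n^2-2n+2r}{n}\big/\binom{n^2}{n} \to 2/e^2$; it never produces the ``intermediate form'' $\frac{1}{n^2}\sum_{x,y} e^{-(d_1+d_2)/n} \to 2/e^2$ that you intend to invoke. You would have to derive that form from the ring sum yourself (again via the uniform exponential asymptotics above), so it is an additional lemma to prove, not an already-established step to cite. Finally, you omit the ``$+1$'' for the square occupied by the queen itself from the forbidden count; this is harmless asymptotically, but the paper's convention is that an occupied square is not safe, so the count should be $(2n-2)+d_1+d_2+1$.
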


For both of the above problems, we prove that the variance of the random variable $\mu_{n,k}$ tends to 0 as $n$ approaches infinity. Within the context of this problem, this result is expected, given that as $n$ grows large, the probability of any given attacking piece being placed in a more powerful space near the center of the board becomes relatively small.

\begin{thm}\label{piecevariance}
    Let $n,k,m,d,a \in \Z_{>0}$. Define $\mu_{n,k}$ as in Definition \ref{mainnotations}, with $dn^{k-m}$ attacking pieces placed, each of which attack $an^{m}$ spaces.  Then, the variance of the random variable with mean $\mu_{n,k}$ approaches $0$ as $n$ approaches infinity.
\end{thm}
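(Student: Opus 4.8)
The plan is to apply the second-moment method. Write the random proportion of safe squares as $P_n := S_n(\B)/n^k$, so that $\E[P_n] = \mu_n$ and
\[
\Var[P_n] \ = \ \frac{1}{n^{2k}}\Var[S_n] \ = \ \frac{1}{n^{2k}}\sum_{\vec x}\sum_{\vec y}\Cov\big(X_{\vec x}(\B),\,X_{\vec y}(\B)\big),
\]
where each sum ranges over all $n^k$ squares $\vec x = (x_1,\ldots,x_k)$. The diagonal terms are harmless: since $X_{\vec x}$ is an indicator, $\Cov(X_{\vec x},X_{\vec x}) = \Var[X_{\vec x}] \le \tfrac14$, and the $n^k$ of them contribute at most $\tfrac14 n^k$, i.e. $O(n^{-k})$ after dividing by $n^{2k}$. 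The entire task is therefore to show that the $n^{2k}$ off-diagonal covariances sum to $o(n^{2k})$.

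For the off-diagonal terms I would use that the $N := dn^{k-m}$ pieces are placed independently, so that for fixed distinct $\vec x \neq \vec y$ the joint probabilities factor over pieces. Letting $p_{\vec x}$ denote the probability that a single randomly placed piece attacks $\vec x$, and $q_{\vec x,\vec y}$ the probability it attacks both $\vec x$ and $\vec y$, one obtains $\E[X_{\vec x}X_{\vec y}] = \alpha^N$ and $\E[X_{\vec x}]\,\E[X_{\vec y}] = \beta^N$ with $\alpha = 1 - p_{\vec x} - p_{\vec y} + q_{\vec x,\vec y}$ and $\beta = (1-p_{\vec x})(1-p_{\vec y})$, so $\alpha - \beta = q_{\vec x,\vec y} - p_{\vec x}p_{\vec y}$. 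Since $\alpha,\beta \in [0,1]$, the telescoping identity $\alpha^N - \beta^N = (\alpha-\beta)\sum_{i=0}^{N-1}\alpha^i\beta^{N-1-i}$ yields the clean bound
\[
\big|\Cov(X_{\vec x},X_{\vec y})\big| \ \le \ N\,|\alpha-\beta| \ \le \ N\big(q_{\vec x,\vec y} + p_{\vec x}p_{\vec y}\big).
\]

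It then remains to sum the two pieces over all ordered pairs. Because each square is attacked from only $O(n^m)$ of the $n^k$ possible positions (this is the hypothesis that a piece attacks $an^m$ squares, together with symmetry of the attack relation), we have $p_{\vec x} = O(n^{m-k})$ uniformly, so $\sum_{\vec x,\vec y} p_{\vec x}p_{\vec y} = \big(\sum_{\vec x} p_{\vec x}\big)^2 = O(n^{2m})$. For the $q$ term I would interchange the order of summation: $\sum_{\vec x\neq \vec y} q_{\vec x,\vec y}$ counts, weighted by $1/n^k$, the triples (position, ordered pair of distinct squares both attacked from that position); since each of the $n^k$ positions attacks at most $an^m$ squares, there are at most $n^k(an^m)^2$ such triples, giving $\sum_{\vec x\neq\vec y} q_{\vec x,\vec y} = O(n^{2m})$ as well. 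Multiplying by $N = dn^{k-m}$ shows the off-diagonal contribution to $\Var[S_n]$ is $O(n^{k+m})$, whence $\Var[P_n] = O(n^{-k}) + O(n^{m-k}) \to 0$, using $m < k$ (each piece attacks a proper, at most $(k-1)$-dimensional, subspace, so $m \le k-1$).

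The main obstacle I anticipate is not the telescoping estimate but the two counting bounds, and in particular making the single-piece attack probabilities rigorous: for bishops the number of attacked squares genuinely varies with position, so the statement ``$p_{\vec x} = O(n^{m-k})$'' must be justified as a uniform upper bound rather than an equality, with the implied constant verified to be independent of $n$. A secondary point to pin down is the placement model: the factorization above assumes independent uniform placements, and if the pieces are instead placed on distinct squares (without replacement) I would replace the exact factorization by a conditioning or negative-association argument, though the final $O(n^{k+m})$ bound on $\Var[S_n]$, and hence the conclusion, should be unaffected.
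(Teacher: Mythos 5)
Your proposal is correct, but it proves the key estimate by a genuinely different mechanism than the paper. The paper starts from the same decomposition $\Var(S_n/n^k) = n^{-2k}\bigl(\sum_{\vec x}\Var(X_{\vec x}) + \sum_{\vec x\ne\vec y}\Cov(X_{\vec x},X_{\vec y})\bigr)$, but then splits the off-diagonal pairs into squares that attack each other (only $O(n^{k+m})$ of them, hence negligible after dividing by $n^{2k}$) and squares that do not, arguing for the latter that the joint safety probability --- a ratio of binomial coefficients in its without-replacement model --- factors asymptotically as $\mu_n\cdot\mu_n$, so those covariances cancel in the limit. You instead bound \emph{every} off-diagonal covariance at once by $N(q_{\vec x,\vec y}+p_{\vec x}p_{\vec y})$ via the telescoping inequality and then sum by double counting. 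This buys an explicit rate, $\Var = O(n^{m-k})$, rather than a limit statement with no rate; it removes the attacking/non-attacking case split; and it never invokes the claim that all squares have the same safety probability (the paper writes $\Var(X_{\vec x}) = \mu_n-\mu_n^2$ and $\Cov = \E[X_{\vec x}X_{\vec y}]-\mu_n^2$ for every square and pair, which is not literally true for bishops, whose safety probability depends on the ring; your uniform upper bounds sidestep this). What the paper's route buys is direct reuse of its Lemma \ref{lemma: comboliminf} machinery for binomial ratios, consistent with the rest of the paper.

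The one substantive discrepancy is the one you flagged: the paper's configurations are sets of $N = dn^{k-m}$ \emph{distinct} squares (all its probabilities have the form $\binom{\cdot}{N}/\binom{n^k}{N}$), so your exact identities $\E[X_{\vec x}X_{\vec y}]=\alpha^N$ and $\E[X_{\vec x}]\E[X_{\vec y}]=\beta^N$ fail verbatim. Your proposed repair does work, with no loss in the final bound: letting $A_{\vec x}$ be the number of squares from which $\vec x$ is attacked (plus $\vec x$ itself), one has $\E[X_{\vec x}] = \prod_{i=0}^{N-1}\frac{n^k-A_{\vec x}-i}{n^k-i}$ and $\E[X_{\vec x}X_{\vec y}] = \prod_{i=0}^{N-1}\frac{n^k-|A_{\vec x}\cup A_{\vec y}|-i}{n^k-i}$; since all factors lie in $[0,1]$, the product form of your telescoping bound, $\left|\prod_i u_i-\prod_i v_i\right|\le\sum_i|u_i-v_i|$, together with $n^k-i\ge n^k-N\ge n^k/2$ for large $n$, gives $|\Cov(X_{\vec x},X_{\vec y})|\le N\bigl(2|A_{\vec x}\cap A_{\vec y}|/n^k + 4A_{\vec x}A_{\vec y}/n^{2k}\bigr)$, which is your bound $N(q_{\vec x,\vec y}+p_{\vec x}p_{\vec y})$ up to constant factors. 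Your two counting estimates then apply verbatim and again give an off-diagonal total of $O(n^{k+m}) = o(n^{2k})$. With that substitution your argument is complete in the paper's setting, and is in fact tighter than the paper's own write-up.
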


For our results in higher dimensions, we first introduce some notation and terminology.\par
\vspace{.25cm}

\subsection{Notation and Basic Definitions in Higher Dimensions}
As we move into $k$ dimensions, we first provide some definitions. There are a variety of possible extensions, all with their own strengths and weaknesses, so choosing to define pieces or setups differently than we do may prove an interesting path for future work.\par
\vspace{.25cm}

\begin{defi}[Higher dimensional boards]\label{CBdefi}
A $k$-dimensional board has $k$ dimensions with equal integer side length $n$. Boards are created by stacking alternating boards in the $(k-1)$-dimensional subspace so that no two adjacent spaces are the same color.
\end{defi}
Note that this is how a standard $2$-dimensional chessboard is created: spaces of alternating color are placed in a line and the colors are shifted in the line above so that no space is adjacent to a space of the same color. This definition is particularly strong as it guarantees that any subspace of a $k$-dimensional chessboard is still a chessboard. It also ensures that bishops maintain their parity in $k$ dimensions. We illustrate what a $5 \times 5 \times 5$ chessboard would look like in Figure \ref{fig:3dboard}.

\begin{figure}[h!bt]
    \centering
\begin{tikzpicture}[scale=.5]

	\draw[very thick] (0,5,0) -- (0,5,5);
	\draw[very thick] (5,0,0) -- (5,0,5);
	\draw[very thick] (5,5,0) -- (5,5,5);
	\draw[very thick] (0,5,0) -- (5,5,0);
	\draw[very thick] (5,0,0) -- (5,5,0);
     \draw[very thick]	(0,0,5) -- (5,0,5) -- (5,5,5) -- (0,5,5) -- cycle;

\foreach \y in {1,3,5}{	
 \foreach \x in {1,3,5}{
         \draw[fill = black] (\x-1,\y-1,5) -- (\x,\y-1,5) -- (\x,\y,5) -- (\x-1,\y,5) -- cycle;
    }
    }
\foreach \y in {2,4}{	
 \foreach \x in {2,4}{
         \draw[fill = black] (\x-1,\y-1,5) -- (\x,\y-1,5) -- (\x,\y,5) -- (\x-1,\y,5) -- cycle;
    }
    }

\foreach \y in {1,3,5}{	
 \foreach \x in {1,3,5}{
         \draw[fill = black] (\x-1,5,\y-1) -- (\x,5,\y-1) -- (\x,5,\y) -- (\x-1,5,\y) -- cycle;
    }
    }
\foreach \y in {2,4}{	
 \foreach \x in {2,4}{
         \draw[fill = black] (\x-1,5,\y-1) -- (\x,5,\y-1) -- (\x,5,\y) -- (\x-1,5,\y) -- cycle;
    }
    }
		
\foreach \y in {1,3,5}{	
 \foreach \x in {1,3,5}{
         \draw[fill = black] (5,\x-1,\y-1) -- (5,\x,\y-1) -- (5,\x,\y) -- (5,\x-1,\y) -- cycle;
    }
    }
\foreach \y in {2,4}{	
 \foreach \x in {2,4}{
         \draw[fill = black] (5,\x-1,\y-1) -- (5,\x,\y-1) -- (5,\x,\y) -- (5,\x-1,\y) -- cycle;
    }
    }

\end{tikzpicture}
\caption{Depiction of a $5 \times 5 \times 5$ chessboard.}
\label{fig:3dboard}
\end{figure}
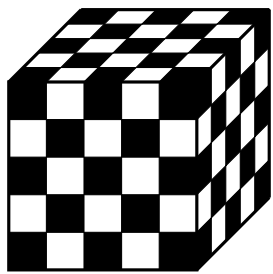
\vspace{.25cm}

We provide two natural extensions of chess pieces in higher dimensions, namely hyper-pieces and line-pieces. Hyper-pieces attack along $(k-1)$-dimensional hyperplanes; for example, a hyper-rook attacks any piece that shares one coordinate with it. Line-pieces attack along lines; for example, a line-rook attacks any piece that shares $(k-1)$ coordinates with it.\par \vspace{.25cm}

We find that for hyper-rooks, after adjusting for their relative board covering power, they cover an equal amount of the board on average. More explicitly, we have the following theorem.
\begin{thm}\label{rooksAvg}
    The mean number of safe spaces on a $k$-dimensional chessboard with side length $n$ and $n$ hyper-rooks placed on it is asymptotically $n^k/e^k$. Hence, the probability a space is safe converges to $1/e^k$.
\end{thm}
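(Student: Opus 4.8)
The plan is to compute $\E[X_{x_1,\ldots,x_k}(\B)]$ directly for an arbitrary target square and to exploit the fact that, unlike the bishop case, this expectation does not depend on where the square sits. Placing a piece uniformly at random on the $n^k$ squares is equivalent to choosing each of its $k$ coordinates independently and uniformly from $\{1,\ldots,n\}$, so the event that one piece fails to attack a fixed square $(x_1,\ldots,x_k)$ has a probability that is the same for every square. Since the pieces are placed independently, the probability that a square is safe factors as the per-piece no-attack probability raised to the number of pieces. Because every square shares the same safe-probability $p$, we get $\mu_n = p$ and $\E[S_n] = n^k p$, so the whole theorem reduces to computing one probability and taking a limit.

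For the hyper-rook case, I would first record that a hyper-rook at $(a_1,\ldots,a_k)$ attacks $(x_1,\ldots,x_k)$ precisely when $a_i = x_i$ for at least one index $i$ (it controls the entire hyperplane through each coordinate), and hence fails to attack exactly when $a_i \neq x_i$ for all $i$. By independence of the coordinates this has probability $\left(\frac{n-1}{n}\right)^k$. Placing $n$ such pieces, the square is safe with probability $\left(\left(\frac{n-1}{n}\right)^k\right)^n = \left(1 - \frac{1}{n}\right)^{kn} \to e^{-k}$. Multiplying by $n^k$ gives the asymptotic mean $n^k/e^k$.

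For the line-rook case, I would record that a line-rook attacks a square precisely when the two positions agree in at least $k-1$ coordinates, i.e.\ differ in at most one. The probability a single uniformly placed line-rook attacks the fixed square is therefore $P(\text{all } k \text{ agree}) + P(\text{exactly } k-1 \text{ agree}) = \frac{1}{n^k} + k \cdot \frac{1}{n^{k-1}} \cdot \frac{n-1}{n} = \frac{kn - k + 1}{n^k}$, so the no-attack probability is $1 - \frac{kn-k+1}{n^k}$. Raising this to the number of pieces $n^{k-1}$ and expanding the logarithm, $n^{k-1}\ln\!\left(1 - \frac{kn-k+1}{n^k}\right) = -\frac{kn-k+1}{n} + O\!\left(n^{-(k-1)}\right) \to -k$, so the safe probability again converges to $e^{-k}$ and the mean to $n^k/e^k$.

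The computations are short; the one point requiring care is the line-rook limit, where the number of pieces $n^{k-1}$ is tuned precisely so that the per-piece attack probability $\sim k/n^{k-1}$ produces the constant $k$ in the exponent. I would verify that the quadratic and higher terms in the logarithm expansion are $O\!\left(n^{-(k-1)}\right)$ and hence negligible for $k \geq 2$, and separately confirm (as motivated after Definition \ref{LPdefi}) that this power of $n$ is the unique scaling giving a nondegenerate limiting proportion. The coincidence of the two answers at $1/e^k$, after accounting for the pieces' differing board-covering power, is then the content of the theorem.
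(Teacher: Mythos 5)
Your computations are internally correct, and your route is genuinely different from the paper's: you evaluate a per-piece no-attack probability directly and multiply over pieces, while the paper counts the positions attacking a fixed square ($kn-(k-1)$ for a line-rook, $kn^{k-1}-O(n^{k-2})$ for a hyper-rook) and feeds those counts into its general Lemma \ref{lemma: comboliminf}, which evaluates $\lim_{n\to\infty}\binom{n^k-A}{P}\big/\binom{n^k}{P}$ for the relevant pair $(A,P)$. Your attack-set counts agree exactly with the paper's, and your logarithmic limit estimates are sound; your observation that position-independence of the rook's attack count is what makes $\mu_n$ equal to a single probability is also the same structural point the paper relies on.

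The genuine gap is that your key step lives in a different probability model from the one the theorem is about. Throughout the paper (Definition \ref{mainnotations} and every denominator $\binom{n^k}{P}$), a board configuration is a uniformly random \emph{set} of $P$ distinct occupied squares, i.e., sampling without replacement; the pieces are therefore not independent, and your assertion that ``the probability that a square is safe factors as the per-piece no-attack probability raised to the number of pieces'' fails in that model. What you compute, $\bigl(1-A/n^k\bigr)^{P}$, is the safe probability for i.i.d.\ placements, whereas the theorem's quantity is the hypergeometric ratio $\binom{n^k-A}{P}\big/\binom{n^k}{P}$. Nor can the two models be reconciled by a ``collisions are rare'' coupling in the case that matters most: such a coupling would work for $n$ hyper-rooks when $k\ge 3$, but for $P=n^{k-1}$ line-rooks the expected number of coinciding pairs is of order $P^2/n^k=n^{k-2}$, which does not vanish for any $k\ge 2$. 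The repair is short but must be carried out in the correct model: write $\binom{n^k-A}{P}\big/\binom{n^k}{P}=\prod_{i=0}^{P-1}\bigl(1-\tfrac{A}{n^k-i}\bigr)$, sandwich this between $\bigl(1-\tfrac{A}{n^k-P}\bigr)^{P}$ and $\bigl(1-\tfrac{A}{n^k}\bigr)^{P}$, and apply exactly the expansion you already did to both bounds; in each rook case both $PA/(n^k-P)$ and $PA/n^k$ tend to $k$, so the ratio tends to $e^{-k}$. With that substitution your argument becomes a correct, self-contained proof --- in effect a streamlined special case of the paper's Lemma \ref{lemma: comboliminf}.
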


We then consider hyper-bishops and hyper-queens in $k$ dimensions. While a rook at any given space attacks the same number of spaces, the number of spaces a bishop attacks varies widely, often with little symmetry. Due to this, we are unable to determine the expected proportion of safe spaces in $k$ dimensions, and instead analyze $3$ dimensions. \par \vspace{.25cm}

Finally, when considering line-pieces, we determine that the expected proportion of safe squares when randomly placing $n^{k-1}$ line-rooks on a $k$-dimensional chessboard is $1/e^k$. We then consider line-bishops and line-queens and provide an integral for estimating the expected proportion of safe squares as $n$ approaches infinity. We computationally determine the expected proportion of safe squares when $k=3$. Our methods here are similar to the 2-dimensional case. \par \vspace{.25cm}

\section{Combinatorial Background}
Many of the results in \cite{MST21} follow from their proof of the fact that \begin{equation*}
    \lim_{n \to \infty} \binom{n^{2}- an -b}{n}\bigg/ \binom{n^{2}}{n} \ = \ \frac{1}{e^{a}}.
\end{equation*}
We generalize their proof to show that a similar statement holds in $k$ dimensions.   \par
\vspace{.25cm}

\begin{lemma}\label{lemma: comboliminf}
    For positive integers $a,k,m,c,d$ and any integer $b$, with $k > m > k-c$, we have
    \begin{equation}
    \lim_{n \to \infty}\binom{n^{k}- an^{m} +bn^{k-c}}{dn^{k-m}} \bigg/ \binom{n^k} {dn^{k-m}} \ = \ \frac{1}{e^{da}}.
    \end{equation}
\end{lemma}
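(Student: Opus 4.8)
The plan is to rewrite the ratio of binomial coefficients as a telescoping product and then squeeze it between two sequences that converge to the same limit. Set $N = n^k$, $r = dn^{k-m}$, and $\Delta = an^m - bn^{k-c}$, so that the top entry of the numerator is exactly $N - \Delta = n^k - an^m + bn^{k-c}$. Using $\binom{M}{r}/\binom{N}{r} = \prod_{i=0}^{r-1}(M-i)/(N-i)$, I would write
\[
\frac{\binom{N-\Delta}{r}}{\binom{N}{r}} \ = \ \prod_{i=0}^{r-1}\frac{N - \Delta - i}{N - i} \ = \ \prod_{i=0}^{r-1}\left(1 - \frac{\Delta}{N-i}\right).
\]
For $n$ large, $\Delta>0$ since its leading term is $an^m$, so each factor lies in $(0,1)$; moreover $1 - \frac{\Delta}{N-i}$ is decreasing in $i$, so the product is trapped between its extreme factors raised to the $r$-th power:
\[
\left(1 - \frac{\Delta}{N-r+1}\right)^r \ \le \ \prod_{i=0}^{r-1}\left(1 - \frac{\Delta}{N-i}\right) \ \le \ \left(1 - \frac{\Delta}{N}\right)^r.
\]

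Next I would compute the limits of the two bounds and show they agree. Taking logarithms and using $\log(1-x) = -x + O(x^2)$ (legitimate because $\Delta/N \to 0$ by $m<k$), the logarithm of the upper bound is
\[
r\log\left(1 - \frac{\Delta}{N}\right) \ = \ -\frac{r\Delta}{N} + O\!\left(\frac{r\Delta^2}{N^2}\right).
\]
The main term simplifies to
\[
\frac{r\Delta}{N} \ = \ \frac{dn^{k-m}(an^m - bn^{k-c})}{n^k} \ = \ ad - bd\,n^{k-m-c},
\]
which tends to $ad$ precisely because $m > k-c$ forces $n^{k-m-c}\to 0$, while the remainder is $O(n^{m-k})\to 0$ since $m<k$. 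Thus the upper bound converges to $e^{-ad}$. The lower bound is handled the same way after observing $N-r+1 = N(1 + O(n^{-m}))$, so replacing $N$ by $N-r+1$ only perturbs the main term by a factor $1 + O(n^{-m})$, which is immaterial in the limit. The squeeze theorem then yields $\binom{N-\Delta}{r}/\binom{N}{r}\to e^{-ad} = 1/e^{da}$.

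The main obstacle — and the place where the hypotheses $k > m > k-c$ are used — is bookkeeping the two competing error sources simultaneously. The inequality $m < k$ is what drives each factor to $1$, kills the quadratic remainder in the logarithm, and pins the leading constant at $\frac{rA}{N}=ad$; the inequality $m > k-c$ is exactly what renders the lower-order shift $bn^{k-c}$ negligible across the $r$ factors of the product. One must also confirm $\Delta>0$ for all large $n$, which underlies the monotonicity that justifies the sandwich; this is immediate since $an^m$ dominates $bn^{k-c}$ when $m>k-c$, regardless of the sign of the integer $b$.
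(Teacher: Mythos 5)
Your proof is correct, but it factors the ratio the ``dual'' way from the paper, and the limit evaluation is genuinely different. The paper cancels the $(dn^{k-m})!$ factors and writes the ratio as a product of $an^{m}-bn^{k-c}$ factors, each of the form $1-\frac{dn^{k-m}}{n^k-i}$; you cancel the other pair of factorials and obtain a product of $r=dn^{k-m}$ factors, each of the form $1-\frac{\Delta}{N-i}$ with $\Delta=an^m-bn^{k-c}$, $N=n^k$. Both arguments then squeeze the product between its extreme factors raised to the length of the product, so the skeleton is the same, but the analysis diverges: the paper avoids logarithms entirely, reducing the upper bound to the standard limit $(1-d/n^m)^{n^m}\to e^{-d}$ and handling the lower bound by factoring out $(1-d/n^m)$ and running a separate $\varepsilon$-argument to show the leftover correction factor tends to $1$; you instead take logarithms and use $\log(1-x)=-x+O(x^2)$, which collapses both bounds into the single computation $r\Delta/N = ad - bd\,n^{k-m-c}\to ad$ plus an $O(n^{m-k})$ error. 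Your route is shorter and makes the role of the hypotheses more transparent ($m<k$ kills the quadratic remainder, $m>k-c$ kills the $b$-term), while the paper's route stays elementary by never invoking asymptotic expansions. One small point to tighten in your write-up: the claim that ``each factor lies in $(0,1)$'' needs not only $\Delta>0$ but also $\Delta < N-r+1$; this holds for all large $n$ since $\Delta + r = o(n^k)$, but it should be said, as it underwrites both the sandwich and the positivity needed to take logarithms.
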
\par
\vspace{.25cm} 
\begin{rem}
    This lemma is extremely powerful because of the behavior it describes. Each term can be related to the chess problems.

    \begin{center}
        \begin{tabular}{c|c}
            $n^k$ & \text{Total spaces on chess board.} \\
            \hline
            $an^m -bn^{k-c}$ & \text{Spaces attacked by piece.} \\
            \hline
            $dn^{k-m}$ & \text{Pieces placed on board.} \\
        \end{tabular}
    \end{center}
    We count the number of board setups in which a space is safe and divide by the total number of possible board configurations, ending up with the probability that the space is safe.
\end{rem}
\begin{proof} Note that

\begin{align}
    &\binom{n^{k}- an^{m} +bn^{k-c}}{dn^{k-m}} \bigg/ {\binom{n^k} {dn^{k-m}}}
    \nonumber\\
    &= \ \frac{(n^k- an^{m}+bn^{k-c})!}{(dn^{k-m})!(n^k- an^{m}+bn^{k-c}-dn^{k-m})!} \cdot \frac{(dn^{k-m})!(n^k-dn^{k-m})!}{(n^k)!} \nonumber\\
     &= \ \frac{(n^k- an^{m}+bn^{k-c})!}{(n^k- an^{m}+bn^{k-c}-dn^{k-m})!} \cdot \frac{(n^k-dn^{k-m})!}{(n^k)!}\nonumber\\
    &= \ \frac{(n^k-dn^{k-m})(n^k-dn^{k-m}-1)\cdots(n^k- dn^{k-m}- an^{m}+bn^{k-c}+1)}{(n^k)(n^k-1)\cdots(n^k- an^{m}+bn^{k-c}+1)}\nonumber\\
    &= \ \prod_{i=0}^{an^{m}-bn^{k-c}-1} \frac{n^k-dn^{k-m}-i}{n^k-i}\nonumber \\
    &= \ \prod_{i=0}^{an^{m}- bn^{k-c}-1} \left(1-\frac{dn^{k-m}}{n^k-i}\right).
\end{align}
Knowing that $\lim_{n\to \infty}(1-d/n^m)^{an^m} = 1/e^{da}$, we look to express $(1-(dn^{k-m})/(n^k-i))$ as $(1-d/n^m-\delta)$, where $\delta$ is some small correction. To this end, we note  \be\frac{dn^{k-m}}{n^{k}-i} \ = \ \frac{d}{n^{m}}+ \frac{di}{n^m(n^{k}-i)}. \ee \\
Given that the product is defined for all $i\leq an^{m} -bn^{k-c}-1$, we see that
\begin{multline} \label{hypcombo}
    \left(1-\frac{d}{n^{m}}- \frac{d(an^{m} -bn^{k-c}-1)}{n^m(n^{k}-an^{m}+bn^{k-c}+1)}\right)^{an^{m} - bn^{k-c}}\\
   \leq \ \prod_{i=0}^{an^{m} -bn^{k-c}-1} \left(1-\frac{dn^{k-m}}{n^k-i}\right)
   \ \leq \ \left(1-\frac{d}{n^{m}}\right)^{an^{m} -bn^{k-c}}.
\end{multline}
Thus, \be\label{bpowertozerocombo} \lim_{n \to \infty}\left(1-\frac{d}{n^{m}}\right)^{an^{m}-bn^{k-c}} \ = \ \lim_{n \to \infty}\left(\left(1-\frac{d}{n^{m}}\right)^{n^{m}}\right)^a \cdot \left(\left(1-\frac{d}{n^{m}}\right)^{n^{k-c}}\right)^b \ = \ \frac{1}{e^{da}}. \ee
Therefore, for any $n^{k-c}$ such that $m>k-c$, $\lim_{n \to \infty}((1-\frac{1}{n^{m}})^{n^{k-c}})^b=1$. Hence, any $n$ with degree less than $m$ does not impact the limit.\par
\vspace{.25cm}

\noindent We now consider the lower bound. Factoring out $(1-\frac{d}{n^m})$, whose behavior we understand, we see that what is left approaches 1, and thus does not change the limit. We find
\begin{align}
    &1-\frac{d}{n^{m}}- \frac{d(an^{m}-bn^{k-c}-1)}{n^m(n^{k}-an^{m}-bn^{k-c}-1)}\notag \\ \notag
    &= \ \left(1-\frac{d}{n^{m}}\right)\left(1- \frac{d(an^{m}-bn^{k-c}-1)}{n^m(n^{k}-an^{m}-bn^{k-c}-1)}\cdot \frac{n^{m}}{n^{m}-d}\right) \\
    &= \ \left(1-\frac{d}{n^{m}}\right)\left(1- \frac{d(an^{m}-bn^{k-c}-1)}{n^{k}-an^{m}-bn^{k-c}-1}\cdot \frac{1}{n^{m}-d}\right).
\end{align}
So from \eqref{hypcombo} and substituting with the above equation, we have \begin{align}
    &\left(1-\frac{d}{n^{m}}\right)^{an^{m}-bn^{k-c}}\left(1- \frac{d(an^{m}-bn^{k-c}-1)}{n^{k}-an^{m}-bn^{k-c}-1}\cdot \frac{1}{n^{m}-d}\right)^{an^{m}-bn^{k-c}} \notag \\
   & \leq \ \prod_{i=0}^{an^{m}-bn^{k-c}-1} \left(1-\frac{dn^{k-m}}{n^k-i}\right) \notag \\
    &\leq \ \left(1-\frac{d}{n^{m}}\right)^{an^{m}-bn^{k-c}}.
\end{align}\\
We now show via the Squeeze Theorem that \be \lim_{n \to \infty}\left(1- \frac{d(an^{m}-bn^{k-c}-1)}{n^{k}-an^{m}-bn^{k-c}-1}\cdot \frac{1}{n^{m}-d}\right)^{an^{m}-bn^{k-c}} \ = \ 1. \ee
Trivially, \be \lim_{n \to \infty}\left(1- \frac{d(an^{m}-bn^{k-c}-1)}{n^{k}-an^{m}-bn^{k-c}-1}\cdot \frac{1}{n^{m}-d}\right)^{an^{m}-bn^{k-c}}\leq (1)^{an^{m}-bn^{k-c}} \ = \ 1.\ee
For the other direction, let $\ep >0$. Then, as $n$ approaches infinity, $\ep > \frac{d(an^{m}-bn^{k-c}-1)}{n^{k}-an^{m}-bn^{k-c}-1}$ given that $k>m$. Then, $\frac{\ep}{n^{m}-d} > \frac{d(an^{m}-bn^{k-c}-1)}{(n^{k}-an^{m}-bn^{k-c}-1)(n^{m}-d)}$, so consider
\begin{align}
    \lim_{n \to \infty}\left(1- \frac{d(an^{m}-bn^{k-c}-1)}{n^{k}-an^{m}-bn^{k-c}-1}\cdot \frac{1}{n^{m}-d}\right)&^{an^{m}-bn^{k-c}}
    > \ \lim_{n \to \infty}\left(1- \frac{\ep}{n^{m}-d}\right)^{an^{m}-bn^{k-c}} \notag \\
    =  \ \lim_{n \to \infty}\left(\left(1- \frac{\ep}{n^{m}}\right)^{n^{m}}\right)^{a}\left(\left(1- \frac{\ep}{n^{m}}\right)^{n^{k-c}}\right)^{b}.
\end{align}
Then, \eqref{bpowertozerocombo} gives us
\begin{align}
    \lim_{n \to \infty}\left(\left(1- \frac{\ep}{n^{m}}\right)^{n^{m}}\right)^{a}\left(\left(1- \frac{\ep}{n^{m}}\right)^{n^{k-c}}\right)^{b}  \
    &= \ \lim_{n \to \infty}\left(\left(1- \frac{\ep}{n^{m}}\right)^{n^{m}}\right)^{a}\lim_{n \to \infty}\left(\left(1- \frac{\ep}{n^{m}}\right)^{n^{k-c}}\right)^{b} \notag \\
    &= \ \lim_{n \to \infty}\left(\left(1- \frac{\ep}{n^{m}}\right)^{n^{m}}\right)^{a} \cdot 1 \notag\\
    &= \ e^{-\ep a}.
\end{align}
Then, as $\ep$ approaches $0$, $e^{-\ep a} \to e^0=1$, proving that
\begin{equation}
\lim_{n \to \infty}\left(1- \frac{d(an^{m}-bn^{k-c}-1)}{n^{k}-an^{m}-bn^{k-c}-1}\cdot \frac{1}{n^{m}-d}\right)^{an^{m}-bn^{k-c}} \ = \ 1.
\end{equation}
Therefore, \be\lim_{n \to \infty}\frac{\binom{n^{k}- an^{m} +bn^{k-c}}{dn^{k-m}}}{\binom{n^k} {dn^{k-m}}} \ = \ \frac{1}{e^{da}}.\ee
\end{proof}

\section{Bishops and Queens in 2 Dimensions}
\subsection{Probability of a Safe Square for Bishops and Queens}
While rooks attack the same number of spaces regardless of their board position, the same is not true for bishops or queens. Recall that the expected percentage of safe spaces is
\begin{equation}
    \mu_n \ := \ \frac{1}{n^2}\sum^n_{i,j = 1} \E[X_{i,j}].
\end{equation}
Note that the expected number of safe spaces is different for each value of $(i,j)$.\par
\vspace{.25cm}

We first consider the odd board case, although we show later that parity becomes irrelevant as $n$ approaches infinity. We define the number of spaces a bishop attacks in terms of $r$ rings. We define the $0$\textsuperscript{th} ring to contain the space $((n-1)/2, (n-1)/2)$ and recursively define $(r+1)\textsuperscript{st}$ ring to contain the spaces along the border of the $r$\textsuperscript{th} ring, as demonstrated by the colored rings in Figure \ref{fig:ringimage}. Then the outermost ring is at $r=(n-1)/2$. \\
\begin{figure}[h!tb]
    \centering
\begin{tikzpicture}[scale=.75]
\filldraw [color = blue, fill = blue!10, ultra thick] (7,-7) rectangle (0,0);
\filldraw [color = green, fill = green!10,ultra thick] (6,-6) rectangle (1,-1);
\filldraw [color = orange, fill = orange!10,,ultra thick] (5,-5) rectangle (2,-2);
\filldraw [color = red, fill = red!10,ultra thick] (4,-4) rectangle (3,-3);
\draw (0,0) rectangle (7,-7);
    \foreach \row in {0,1, ..., 6} {
        \foreach \column in {0, ..., 2} {
    \fill[black] ({2*\column + mod(\row,2)}, -\row) rectangle +(1,-1);
        }
        \foreach \column in {6} {
        \foreach \row in {0,2,4,6}
    \fill[black] ({\column}, -\row) rectangle +(1,-1);
        }
    }
\draw[red, ultra thick, ->] (3,-7)--(0,-4);
\draw[red, ultra thick, ->] (2,-7)--(7,-2);
\end{tikzpicture}
\caption{Depiction of the rings on a 7 by 7 chessboard, as well as the attacking path of a bishop at (3,1).}
    \label{fig:ringimage}
\end{figure}
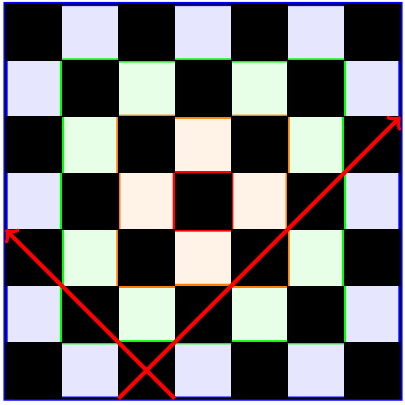

For all $r\neq 0$, the $r$\textsuperscript{th} ring has width $2r+1$ and therefore perimeter $4(2r+1)-4=4(2r)$, given that spaces overlap at the corners. Note that the space in the 0\textsuperscript{th} ring attacks $2(n-1)$ spaces and occupies one. Additionally, we see that for each ring movement outward, a bishop attacks $2$ less spaces in the direction it moves and attacks no more in the opposite direction. Movement within a ring does not change the number of spaces a bishop attacks. Therefore, any bishop in the $r$\textsuperscript{th} ring attacks $2n-2r-1$ spaces. \par
\vspace{.25cm}

Thus, the expected percentage of safe spaces on an odd $n \times n$ chessboard with $n$ bishops is:
\be  \mu_n \ = \ \frac{1}{n^2} \cdot \frac{{\binom{n^2-2n+1}{n}}}{{\binom{n^2}{n}}} + \sum_{r=1}^{(n-1)/2}\frac{4(2r)}{n^2} \frac{{\binom{n^2-2n+2r+1}{n}}}{{\binom{n^2}{n}}} . \ee

Before we evaluate the limit as $n$ approaches infinity, we provide naive bounds. When a bishop is placed in the center ring, with $r=0$, it attacks $2n-1$ spaces. If all bishops were this powerful, then by Lemma \ref{lemma: comboliminf}, the expected percentage of safe spaces would be $1/e^2$. Any bishop placed on the outermost ring, such that $r=(n-1)/2$, attacks $2n-2((n-1)/2)-1=n$ spaces. If all bishops were this powerful, the expected percentage of safe spaces would be $1/e$. Because randomly placed bishops are at most as powerful as a center bishop, and at least as powerful as an outer bishop,  we bound the expected percentage of safe spaces by
\begin{equation}
\frac{1}{e^2} \ \leq \ \mu_n \ \leq \ \frac{1}{e}.
\end{equation}

With these bounds in mind, we now evaluate the limit. We begin by simplifying. We first note that the probability of being placed on the center space is $1/n^2$ and a bishop in that space attacks $2n-1$ spaces, meaning that as $n$ approaches infinity, this bishop does not contribute to the main term.  Additionally, as per Lemma \ref{lemma: comboliminf}, any term of degree less than $k-1$ becomes negligible as $n$ approaches infinity. As we take the limit, we apply Lemma \ref{lemma: comboliminf} so that 
\be \lim_{n\to \infty} \sum_{r=1}^{(n-1)/2} \left(
\frac{8r}{n^2} \frac{{\binom{n^2-2n+2r}{n}}}{{\binom{n^2}{n}}}  \right) \ = \ \lim_{n \to \infty} \sum_{r=1}^{(n-1)/2} \left(
\frac{8r}{n^2} e^{-2(1-r/n)} \right).\ee
Interpreting this as a Riemann sum, with $x=r/n$, we can rewrite this as an integral, and then compute via integration by parts: 

\begin{align*}
\lim_{n \to \infty} \sum_{r=1}^{(n-1)/2} \left(
\frac{8r}{n^2} e^{-2(1-r/n)}\right)
&= \int_{0}^{1/2}8xe^{-2(1-x)}dx\\ 
&= 8e^{-2} \int_{0}^{1/2}xe^{2x}dx\\
&= 8e^{-2}\left( \frac{1}{2}xe^{2x}|_{0}^{1/2} - \frac{1}{2}\int_{0}^{1/2}e^{2x}dx \right)\\
&= 8e^{-2}\left(\frac{e}{4}- \left(\frac{e}{4}-\frac{1}{4}\right)\right)\\
&= \frac{2}{e^2}.
\end{align*}

Because we showed that constant terms do not impact the final limit, the expected proportion of safe spaces is 
\be \lim_{n \to \infty} \sum_{r=1}^{(n-1)/2} \left(
\frac{8r}{n^2} \prod_{\alpha=0}^{2n-2r-1}\frac{n^2-n-\alpha}{n^2-\alpha}  \right) \ = \ \frac{2}{e^2} \ \approx \ 27.067 \% .\ee

We can define the even case similarly. The $0$\textsuperscript{th} ring has $4$ spaces, and the $r$\textsuperscript{th} ring has $4(2r+1)$ spaces. Since we index from $0$, the outermost ring is the $(n/2-1)$\textsuperscript{st} ring. A bishop in the $0$\textsuperscript{th} ring attacks $n-1$ spaces along one diagonal and $n-2$ along the other. For every ring movement outward, it attacks $2$ less spaces. Thus, the expected percentage of safe spaces on an even $n \times n$ chessboard with $n$ randomly placed bishops is
\begin{align}
    \mu_n & \ = \  \sum_{r=0}^{(n/2-1)}\frac{4(2r+1)}{n^2} \frac{{\binom{n^2-2n+2r+2}{n}}}{{\binom{n^2}{n}}}. \notag \\
\end{align}
Similar computation as in the odd case shows that as $n$ approaches infinity, this sum converges to $2/e^2$. \par

\vspace{.25cm}

Note that the spaces in which the rook movement of a queen and the bishop movement of a queen attack are disjoint sets. A queen in the $r$\textsuperscript{th} ring attacks $2(n-1)$ spaces with rook movement and $2(n-1)-2r$ with bishop movement. From this, it follows that the percentage of safe spaces on an $n \times n$ board with $n$ randomly placed queens is \be \sum_{r=0}^{(n-1)/2}\frac{4(2r)}{n^2} \frac{{\binom{n^2-4n+2r+1}{n}}}{{\binom{n^2}{n}}}. \ee
Following the proof above, we observe that the limit of this expression as $n$ approaches infinity is $2/e^4$.

\subsection{Variance for 2D Bishops and Queens} As we determine variance throughout the paper, we use a similar method for all pieces in any $k$ dimensions. To reduce repetition, we prove a general version here, noting that it applies to bishops and queens in two dimensions.
\begin{thm}\label{variance}
    Let $n,k,m,d,a \in \Z^{>0}$. Define $\mu_{n,k}$ as the average percentage of safe spaces on a $k$-dimensional chessboard with side length $n$, with $dn^{k-m}$ attacking pieces placed, each of which attack $an^{m}$ spaces.  Then, the variance of the random variable with mean $\mu_{n,k}$ approaches $0$ as $n$ approaches infinity.
\end{thm}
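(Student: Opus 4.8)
The plan is to write the proportion of safe squares as the random variable $T_n=S_n(\B)/n^k$, so that the quantity to control is $\V{T_n}=n^{-2k}\V{S_n}$, and to expand $\V{S_n}=\sum_{\mathbf x,\mathbf y}\Cov(X_{\mathbf x},X_{\mathbf y})$ over all ordered pairs of cells. First I would record the combinatorial form of each term. Writing $N=n^k$ for the number of cells, $p=dn^{k-m}$ for the number of pieces, and $A_{\mathbf x}$ for the set of cells from which a piece attacks $\mathbf x$ (so $|A_{\mathbf x}|=an^m$), the pieces form a uniform random $p$-subset $P\subseteq[N]$, and $\mathbf x$ is safe exactly when $P\cap A_{\mathbf x}=\emptyset$. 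Hence $\E[X_{\mathbf x}]=\binom{N-an^m}{p}\big/\binom{N}{p}$ and, setting $w=w(\mathbf x,\mathbf y):=|A_{\mathbf x}\cap A_{\mathbf y}|$, we get $\E[X_{\mathbf x}X_{\mathbf y}]=\binom{N-|A_{\mathbf x}\cup A_{\mathbf y}|}{p}\big/\binom{N}{p}$, where $|A_{\mathbf x}\cup A_{\mathbf y}|=2an^m-w$.

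The heart of the argument is a bound on each covariance in terms of the overlap $w$. Using $\binom{M-\ell}{p}/\binom{M}{p}=\prod_{i=0}^{\ell-1}\frac{M-p-i}{M-i}$ and conditioning on $\{P\cap A_{\mathbf x}=\emptyset\}$, the ratio $R:=\E[X_{\mathbf x}X_{\mathbf y}]/(\E[X_{\mathbf x}]\E[X_{\mathbf y}])$ telescopes into a product of $an^m-w$ factors each $<1$ (the cost of the shrinking universe) times $w$ factors of the form $\frac{N-i}{N-p-i}>1$. Estimating the two products separately, and using $p/N\asymp n^{-m}$ together with $w\le an^m$ to keep every exponent bounded, gives for all large $n$ a two-sided bound $1-C_1 n^{m-k}\le R\le \exp(C_2\,w/n^m)\le 1+C_3\,w/n^m$, with constants depending only on $a,d$. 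Since $0\le\E[X_{\mathbf x}]\E[X_{\mathbf y}]\le1$, this yields the uniform estimate $|\Cov(X_{\mathbf x},X_{\mathbf y})|\le C_3\,w(\mathbf x,\mathbf y)/n^m+C_1 n^{m-k}$.

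It then remains to sum this over all pairs. The $n^{2k}$ occurrences of the constant term contribute $O(n^{m-k})\cdot n^{2k}=O(n^{k+m})$ to $\V{S_n}$. For the overlap term I would exploit the symmetry of chess attacks, $\mathbf z\in A_{\mathbf x}\iff\mathbf x\in A_{\mathbf z}$, to interchange the order of summation:
\[
\sum_{\mathbf x,\mathbf y}w(\mathbf x,\mathbf y)=\sum_{\mathbf z}\Big(\sum_{\mathbf x}\mathbf{1}[\mathbf z\in A_{\mathbf x}]\Big)^2=\sum_{\mathbf z}|A_{\mathbf z}|^2\le n^k(an^m)^2=a^2n^{k+2m}.
\]
Dividing by $n^m$, this term also contributes $O(n^{k+m})$, while the diagonal pairs contribute at most $\sum_{\mathbf x}\E[X_{\mathbf x}]=\E[S_n]=O(n^k)$. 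Altogether $\V{S_n}=O(n^{k+m})$, so $\V{T_n}=n^{-2k}\V{S_n}=O(n^{m-k})\to0$ since $m<k$ (the regime forced by placing $dn^{k-m}$ pieces, consistent with the hypothesis $k>m$ of Lemma \ref{lemma: comboliminf}).

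The main obstacle is the covariance bound of the second paragraph: the events $\{\mathbf x\text{ safe}\}$ and $\{\mathbf y\text{ safe}\}$ are negatively correlated when the attack sets are nearly disjoint and positively correlated when they overlap heavily, so a crude ``good pair / bad pair'' split would drag piece-specific geometry into the argument. Bounding $|\Cov|$ linearly in the overlap $w$ and then collapsing $\sum_{\mathbf x,\mathbf y}w$ by the attack-symmetry double count is exactly what makes the estimate uniform across every piece and dimension; carrying out the product estimate for $R$ carefully enough to extract a clean constant independent of $\mathbf x,\mathbf y$ is the one genuinely delicate computation.
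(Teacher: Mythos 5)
Your proposal is correct, and while it shares the paper's outer skeleton (write $\Var(S_n/n^k)=n^{-2k}\sum_{\mathbf{x},\mathbf{y}}\Cov(X_{\mathbf{x}},X_{\mathbf{y}})$, handle the diagonal separately, and use the fact that attacking pairs of squares are comparatively rare), it handles the covariance sum by a genuinely different route. The paper argues softly: it splits off-diagonal pairs into squares that do not attack each other, for which it applies Lemma \ref{lemma: comboliminf} to the union of the two attack sets to get $\E[X_{\mathbf{x}}X_{\mathbf{y}}]\to\mu_n^2$ (implicitly using the piece-specific geometric fact that non-attacking squares have attack sets overlapping in only lower-order many cells), and squares that do attack each other, of which there are only $O(n^{k+m})=o(n^{2k})$, so their bounded covariances vanish under the normalization. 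You instead prove a uniform quantitative bound $\lvert\Cov(X_{\mathbf{x}},X_{\mathbf{y}})\rvert\le C_3\,w/n^m+C_1n^{m-k}$, linear in the overlap $w=\lvert A_{\mathbf{x}}\cap A_{\mathbf{y}}\rvert$; your telescoping is sound, since with $f(i)=\frac{N-p-i}{N-i}$ the $an^m-w$ paired ``shrinking universe'' factors are
\begin{equation*}
\frac{f(s+j)}{f(j)} \ = \ 1-\frac{ps}{(N-p-j)(N-s-j)} \ = \ 1-O\!\left(n^{-k}\right),
\end{equation*}
while the $w$ inflation factors contribute at most $\exp\left(O(w/n^m)\right)$, and $w/n^m\le a$ keeps that bounded. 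You then collapse $\sum_{\mathbf{x},\mathbf{y}}w(\mathbf{x},\mathbf{y})=\sum_{\mathbf{z}}(an^m)^2=a^2n^{k+2m}$ by double counting. Your route buys an explicit rate $\Var(S_n/n^k)=O(n^{m-k})$, no case analysis, no appeal to Lemma \ref{lemma: comboliminf}, and validity for \emph{any} symmetric attack rule satisfying the stated hypothesis: the double count automatically forces pairs with large overlap to be few, so the interpolation between nearly disjoint and heavily overlapping attack sets that the paper handles by cases is free. The paper's route buys economy, recycling the already-proven limit lemma and the structure of the expected-value computations, at the cost of being qualitative and tied to the geometry of the particular pieces. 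Both arguments require $m<k$, which you correctly flag as the regime forced by the setup.
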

\begin{proof}
By the definition of variance and standard properties
\begin{align}
    &\Var\left(\frac{S_{n,k}}{n^k}\right) \ = \ \frac{\Var(S_{n,k})}{n^{2k}} \notag \\
    &= \ \frac{1}{n^{2k}}\left( \sum_{i_1,i_2,...,i_k=1}^n\Var(X_{i_1,i_2,...,i_k})+\sum_{\substack{i_1,...,i_k,j_1,...,j_k=1 \\ (i_1,...,i_k) \neq (j_1,...,j_k)}}^n\Cov(X_{i_1,i_2,...,i_k}, X_{j_1,j_2,...,j_k})\right).
\end{align}
We then look at the variance and covariance terms separately.\par
\vspace{.5cm}

\textbf{Variance Term} From the definition of variance, and because $X_{i_1,...,i_k}$ is a binary indicator function, \vspace{-.15cm}\begin{align*}
    \Var(X_{i_1,...,i_k})& \ = \ \E(X_{i_1,...,i_k}^2)-\E(X_{i_1,...,i_k})^2\\
    &= \ \E(X_{i_1,...,i_k})-\E(X_{i_1,...,i_k})^2\\
    &= \ \mu_{n,k}-\mu_{n,k}^2.
\end{align*}
Thus, the contribution of the variance term to $S_{n,k}$ is 
\be \sum_{i_1,i_2,...,i_k=1}^n\Var(X_{i_1,i_2,...,i_k}) \ = \ n^k(\mu_{n,k} - \mu_{n,k}^2).\ee

\textbf{Covariance Term}
We now consider the covariance term. $X_{i_1,...,i_k}$ and $X_{j_1,...,j_k}$ both consider all possible board configurations, so have the same expected value. Then the following holds: \begin{align}
    &\Cov(X_{i_1,...,i_k}, X_{j_1,...,j_k}) \notag\\
    &= \ \E[(X_{i_1,...,i_k}-\mu_{i_1,...,i_k})(X_{j_1,...,j_k}-\mu_{j_1,...,j_k})] \notag \\ \notag
    &= \  \E[X_{i_1,...,i_k}X_{j_1,...,j_k}]-\E[\mu_{j_1,...,j_k}X_{i_1,...,i_k}]-\E[\mu_{i_1,...,i_k}X_{j_1,...,j_k}]+\mu_{i_1,...,i_k}\mu_{j_1,...,j_k}\\
    &= \  \E[X_{i_1,...,i_k}X_{j_1,...,j_k}]- \mu_{n,k}^2.
\end{align}
As there are $n^k$ choices for $(i_1,...,i_k)$ and $n^k-1$ choices for $(j_1,...,j_k)$, there are $n^{2k}-n^k$ distinct pairs. We consider two cases.\par
\vspace{.25cm}

\textit{Case 1: Pieces do not attack each other.}
First, we calculate how many such pairs there are. There are $n^k$ choices for $(i_1,...,i_k)$, which attack some $a_in^m-b_in^{k-c}$ spaces. Then there are $n^k-a_in^m+b_in^{k-c}$ choices for $(j_1,...,j_k)$, meaning there are $n^k(n^{k}-a_in^m+b_in^{k-c})$ pairs that do not attack each other. Given that every piece attacks in some dimension less than $k$, the number of non-attacking pairs is always of degree $2k$, regardless of the dimension or type of piece.

A piece at $(i_1,...,i_k)$ attacks $a_in^m-b_in^{k-c}$ spaces, while a piece at $(j_1,...,j_k)$ attacks $a_jn^m-b_jn^{k-c}$ spaces. Then the probability of a space being safe for a given $(i_1,...,i_k)$ and $(j_1,...,j_k)$ is ${\binom{n^k-a_in^m-a_jn^m+b_in^{k-c}+b_jn^{k-c}}{n^{k-c}}}/{\binom{n^k}{n^{k-c}}}$. This is the number of board configurations where pieces are placed at spaces that attack neither $(i_1,...,i_k)$ nor $(j_1,...,j_k)$, divided by all possible board configurations. Additionally, the probability of a bishop or queen being placed in the $r$\textsuperscript{th} ring is $8r/n^k$. We evaluate the limit
\begin{align}
&\lim_{n \to \infty}\sum_{r_{k},r_{k}=0}^{(n-1)/2,(n-1)/2} \ldots \sum_{r_2,r_2=0}^{r_3,r_3}\left(
\frac{64r_2r_2}{n^{2k}} \prod_{\alpha=0}^{a_in^m+a_jn^m-b_in^{k-c}-b_jn^{k-c}}\frac{n^k-n-\alpha^{k-c}}{n^k-\alpha n^{k-c}}  \right) \notag \\
 &=\lim_{n \to \infty}\left[\sum_{r_{k}=0}^{(n-1)/2}\ldots \sum_{r_2=0}^{r_3}\left(
    \frac{8r_2}{n^{k}} \prod_{\alpha=0}^{a_in^m-b_in^{k-c}}\frac{n^k-n-\alpha}{n^k-\alpha}  \right)\right] \cdot \left[ \sum_{r_{k}=0}^{(n-1)/2}\ldots\sum_{r_2=0}^{r_3}\left(
    \frac{8r_2}{n^{k}} \prod_{\alpha=0}^{a_jn^m-b_jn^{k-c}}\frac{n^k-n-\alpha^{k-c}}{n^k-\alpha n^{k-c}}  \right) \right]\notag \\
&= \ \mu_n \cdot \mu_n \notag \\
&= \  \mu_n^2.
\end{align}

\vspace{.5cm}
\textit{Case 2: Pieces attack each other.}
There are $n^{2k}-n^k$ distinct pairs of spaces. For pieces that attack $an^m$ spaces, we have $n^{2k}-an^{k+m}$ pairs that do not attack each other, meaning that there are $an^{k+m}-n^k$ pairs that do attack each other. However, as we take the variance, we divide by $n^{2k}$, meaning that any terms of degree less than $n^{2k}$ tend to 0, so it is not necessary to calculate the percentage of safe spaces when two pieces attack each other. Therefore, we do not consider it as a term when we calculate the variance.\par
\vspace{.25cm}

\textbf{Variance Conclusion}
We calculate $\Var(S_{n,k}/n^k)=\Var(S_{n,k})/n^{2k}$.  Then,
from the previous sections, \begin{align}
    \lim_{n \to \infty}\frac{\Var(S_{n,k})}{n^{2k}}& \ = \ \lim_{n \to \infty}\frac{1}{n^{2k}}\left(  \sum_{i_1,...,i_k=1}^n\Var(X_{i_1,...,i_k})+\sum_{i_1,...,i_k,j_1,...,j_k=1}^n\Cov(X_{i_1,...,i_k}, X_{j_1,...,j_k})\right) \notag \\ \notag
    &= \  \lim_{n \to \infty} \frac{1}{n^{2k}}\left( n^k(\mu_{n,k} - \mu_{n,k}^2)+(n^{2k}-k_in^{k-c}n^k)\mu_{n,k}^2 - n^k(n^k-1)\mu_{n,k}^2)\right)\\
    &= \ \lim_{n \to \infty}(\mu_{n,k} - \mu_{n,k}^2)\left(\frac{1}{n^k}\right) + \mu_{n,k}^2\left(1-\frac{k_jn^{2k-c}}{n^{2k}}\right) - \mu_{n,k}^2\left(1-\frac{1}{n^k}\right).
\end{align}
We note the constant terms cancel out, given that $\mu_n^2-\mu_n^2=0$. All the other terms approach $0$ as $n$ approaches infinity, so the variance can be made as close to $0$ as desired, meaning the average percentage of safe spaces converges for $n^{k-1}$ line-pieces and $n$ hyper-pieces in any $k$ dimensions.
\end{proof}
It follows that the percentage of safe spaces for bishops and queens also converges in two dimensions.

\section{Hyper-Pieces in Higher Dimensions}

One way to extend the movement of chess pieces to $k$ dimensions is by exploring ``hyper-pieces.'' Any hyper-piece in $k$ dimensions attacks in $(k-1)$-dimensional subspaces. We place $n$ hyper-pieces in order to obtain dominance of the board. We are able to find the expected proportion of safe squares for  hyper-rooks, but the other pieces evade our efforts beyond simple bounding arguments.

\subsection{Hyper-rooks}
We define hyper-rooks and give the proof of Theorem \ref{rooksAvg}.
\begin{defi}[Hyper-rooks]\label{HRdefi}
    A hyper-rook attacks any piece that shares at least one coordinate with it.
\end{defi}
Therefore, a hyper-rook attacks in a $(k-1)$-dimensional plane, which has $n^{k-1}$ spaces. Because the board has $k$ dimensions, there are $k$ of these planes, noting that we have counted some $an^{k-2}$ extra points from overlaps. This means that for $k$ dimensions, a hyper-rook leaves $n^k-kn^{k-1}-an^{k-2}$ safe spaces. Therefore, by Lemma \ref{lemma: comboliminf}, the expected proportion of safe spaces on a $k$-dimensional board with side length $n$ and $n$ hyper-rooks as $n$ approaches infinity is
\be \lim_{n \to \infty}\binom{n^k- kn^{k-1}-an^{k-2}} {n} \bigg/ \binom{n^k} {n} \ = \ \frac{1}{e^k}.\ee

\subsection{Hyper-bishops}

One way to analyze a bishop at $(i,j)$ is as a piece that attacks along the lines of
\begin{align}
    (x-i) + (y-j) \ &= \ 0, \notag \\
    (x-i) - (y-j) \ &= \ 0.
\end{align}
One option to extend this into higher dimensions is to add on extra coordinates, in all possible diagonal subspaces. For example, in $3$ dimensions, the planes to describe hyper-bishop movement at $(i,j,k)$ would be
\begin{align}
    (x-i) + (y-j) + (z-k) \ &= \ 0, \notag \\
    (x-i) + (y-j) - (z-k) \ &= \ 0, \notag \\
    (x-i) - (y-j) + (z-k) \ &= \ 0, \notag \\
    (x-i) - (y-j) - (z-k) \ &= \ 0.
\end{align}
\begin{defi}[Hyper-Bishops]\label{HBdefi}
    In general, for a $k$-dimensional chessboard, a hyper-bishop at\\
    $(a_1, a_2, \ldots, a_k)$ can attack a space \((x_1, x_2, \ldots x_k)\) if it satisfies an equation of the form 
\begin{equation}
    (x_1 - a_1) \pm (x_2 - a_2) \pm \cdots \pm (x_k - a_k) \ = \ 0.
\end{equation}

\end{defi}
\vspace{.25cm}

This definition has a number of key analogies to $2$-dimensional bishops. It only attacks spaces of the same color, and it projects downward, so that in any $2$-dimensional subspace of the board, it moves as a $2$-dimensional bishop would. Unfortunately, the one major disadvantage of this definition is that counting the number of spaces seen in $k$ dimensions becomes challenging as we lose the symmetrical pattern of rings that we saw for bishops in 2 dimensions. Instead, we naively bound the number of spaces attacked for $3$ dimensions, by considering the number of spaces seen by a center piece, which is the most powerful, and a corner piece, which is the least powerful.\par
\vspace{.25cm}

To do this, we consider a vertical slicing method, where we analyze the spaces a hyper-bishop attacks on each $2$-dimensional sub-board. As an example, we show center hyper-bishop movement on a $5 \times 5 \times 5$ cube in Figure \ref{fig:cenbisimage} and corner hyper-bishop movement on a $5 \times 5 \times 5$ cube in Figure \ref{fig:corbisimage}. \par \vspace{.25cm}

\begin{figure}[h!tb]
\centering
\begin{tikzpicture}[scale=.5]
\draw (0,0) rectangle (5,-5);
    \foreach \row in {0,1, ..., 4} {
        \foreach \column in {0, ..., 1} {
    \fill[black] ({2*\column + mod(\row,2)}, -\row) rectangle +(1,-1);
        }
        \foreach \column in {4} {
        \foreach \row in {0,2,4}
    \fill[black] ({\column}, -\row) rectangle +(1,-1);
        }
    }
\draw[red, ultra thick, -] (0,-2)--(3,-5);
\draw[red, ultra thick, -] (5,-2)--(2,-5);
\draw[red, ultra thick, -] (0,-3)--(3,0);
\draw[red, ultra thick, -] (5,-3)--(2,0);

\node at (2.5,-6) {$y = 1$};
\end{tikzpicture}
\begin{tikzpicture}[scale=.5]
\draw (0,0) rectangle (5,-5);
    \foreach \row in {0,1, ..., 4} {
        \foreach \column in {1,3} {
        \foreach \row in {0,2,4}
    \fill[black] ({\column}, -\row) rectangle +(1,-1);
        }
        \foreach \column in {0,2,4} {
        \foreach \row in {1,3}
    \fill[black] ({\column}, -\row) rectangle +(1,-1);
        }
    }
\draw[red, ultra thick, -] (0,-1)--(4,-5);
\draw[red, ultra thick, -] (5,-1)--(1,-5);
\draw[red, ultra thick, -] (0,-4)--(4,0);
\draw[red, ultra thick, -] (5,-4)--(1,0);

\node at (2.5,-6) {$y = 2$};
\end{tikzpicture}
\begin{tikzpicture}[scale=.5]
\draw (0,0) rectangle (5,-5);
    \foreach \row in {0,1, ..., 4} {
        \foreach \column in {0, ..., 1} {
    \fill[black] ({2*\column + mod(\row,2)}, -\row) rectangle +(1,-1);
        }
        \foreach \column in {4} {
        \foreach \row in {0,2,4}
    \fill[black] ({\column}, -\row) rectangle +(1,-1);
        }
    }
\draw[red, ultra thick, -] (0,0)--(5,-5);
\draw[red, ultra thick, -] (5,0)--(0,-5);
\node at (2.5,-6) {$y = 3$};
\end{tikzpicture}
\begin{tikzpicture}[scale=.5]
\draw (0,0) rectangle (5,-5);
     \foreach \row in {0,1, ..., 4} {
        \foreach \column in {1,3} {
        \foreach \row in {0,2,4}
    \fill[black] ({\column}, -\row) rectangle +(1,-1);
        }
        \foreach \column in {0,2,4} {
        \foreach \row in {1,3}
    \fill[black] ({\column}, -\row) rectangle +(1,-1);
        }
    }

\draw[red, ultra thick, -] (0,-1)--(4,-5);
\draw[red, ultra thick, -] (5,-1)--(1,-5);
\draw[red, ultra thick, -] (0,-4)--(4,0);
\draw[red, ultra thick, -] (5,-4)--(1,0);

\node at (2.5,-6) {$y = 4$};
\end{tikzpicture}
\begin{tikzpicture}[scale=.5]
\draw (0,0) rectangle (5,-5);
    \foreach \row in {0,1, ..., 4} {
        \foreach \column in {0, ..., 1} {
    \fill[black] ({2*\column + mod(\row,2)}, -\row) rectangle +(1,-1);
        }
        \foreach \column in {4} {
        \foreach \row in {0,2,4}
    \fill[black] ({\column}, -\row) rectangle +(1,-1);
        }
    }
\draw[red, ultra thick, -] (0,-2)--(3,-5);
\draw[red, ultra thick, -] (5,-2)--(2,-5);
\draw[red, ultra thick, -] (0,-3)--(3,0);
\draw[red, ultra thick, -] (5,-3)--(2,0);

\node at (2.5,-6) {$y = 5$};
\end{tikzpicture}
\caption{Spaces attacked by a hyper-bishop placed at $(3,3,3)$ on a $5 \times 5 \times 5$ board.}
    \label{fig:cenbisimage}
    \vspace{.5cm}
\begin{tikzpicture}[scale=.5]
\draw (0,0) rectangle (5,-5);
    \foreach \row in {0,1, ..., 4} {
        \foreach \column in {0, ..., 1} {
    \fill[black] ({2*\column + mod(\row,2)}, -\row) rectangle +(1,-1);
        }
        \foreach \column in {4} {
        \foreach \row in {0,2,4}
    \fill[black] ({\column}, -\row) rectangle +(1,-1);
        }
    }
\draw[red, ultra thick, -] (0,0)--(5,-5);
\node at (2.5,-6) {$y = 1$};
\end{tikzpicture}
\begin{tikzpicture}[scale=.5]
\draw (0,0) rectangle (5,-5);
     \foreach \row in {0,1, ..., 4} {
        \foreach \column in {1,3} {
        \foreach \row in {0,2,4}
    \fill[black] ({\column}, -\row) rectangle +(1,-1);
        }
        \foreach \column in {0,2,4} {
        \foreach \row in {1,3}
    \fill[black] ({\column}, -\row) rectangle +(1,-1);
        }
    }
\draw[red, ultra thick, -] (1,0)--(5,-4);
\draw[red, ultra thick, -] (0,-1)--(4,-5);
\draw[red, ultra thick, -] (0,-2)--(2,0);
\node at (2.5,-6) {$y = 2$};
\end{tikzpicture}
\begin{tikzpicture}[scale=.5]
\draw (0,0) rectangle (5,-5);
    \foreach \row in {0,1, ..., 4} {
        \foreach \column in {0, ..., 1} {
    \fill[black] ({2*\column + mod(\row,2)}, -\row) rectangle +(1,-1);
        }
        \foreach \column in {4} {
        \foreach \row in {0,2,4}
    \fill[black] ({\column}, -\row) rectangle +(1,-1);
        }
    }
\draw[red, ultra thick, -] (2,0)--(5,-3);
\draw[red, ultra thick, -] (0,-2)--(3,-5);
\draw[red, ultra thick, -] (0,-3)--(3,0);
\node at (2.5,-6) {$y = 3$};
\end{tikzpicture}
\begin{tikzpicture}[scale=.5]
\draw (0,0) rectangle (5,-5);
     \foreach \row in {0,1, ..., 4} {
        \foreach \column in {1,3} {
        \foreach \row in {0,2,4}
    \fill[black] ({\column}, -\row) rectangle +(1,-1);
        }
        \foreach \column in {0,2,4} {
        \foreach \row in {1,3}
    \fill[black] ({\column}, -\row) rectangle +(1,-1);
        }
    }

\draw[red, ultra thick, -] (3,0)--(5,-2);
\draw[red, ultra thick, -] (0,-3)--(2,-5);
\draw[red, ultra thick, -] (0,-4)--(4,0);
\node at (2.5,-6) {$y = 4$};
\end{tikzpicture}
\begin{tikzpicture}[scale=.5]
\draw (0,0) rectangle (5,-5);
    \foreach \row in {0,1, ..., 4} {
        \foreach \column in {0, ..., 1} {
    \fill[black] ({2*\column + mod(\row,2)}, -\row) rectangle +(1,-1);
        }
        \foreach \column in {4} {
        \foreach \row in {0,2,4}
    \fill[black] ({\column}, -\row) rectangle +(1,-1);
        }
    }
\draw[red, ultra thick, -] (4,0)--(5,-1);
\draw[red, ultra thick, -] (0,-4)--(1,-5);
\draw[red, ultra thick, -] (0,-5)--(5,0);
\node at (2.5,-6) {$y = 5$};
\end{tikzpicture}
\caption{Spaces attacked by a hyper-bishop placed at $(1,1,1)$ on a $5 \times 5 \times 5$ board.}
    \label{fig:corbisimage}
\end{figure}
\label{corbisimage}

We first consider the expected percentage of safe spaces if every hyper-bishop is as effective as one placed in the center space. We consider $n$ vertical slices of the chessboard, and analyze the hyper-bishop movement in each one. In the center slice, the hyper-bishop attacks $2n-1$ spaces. For every slice of distance $i$ from $(n-1)/2$, the hyper-bishop attacks in a diagonal on both sides in the ring $i$. This is analogous to moving as if it were $2$ regular bishops on the vertical slice, both placed in ring $i$. This gives us $2(2n-2i-1)-2$ spaces seen. Therefore, the center hyper-bishop sees the following number of spaces on an $n \times n \times n$ board:
\begin{align}
    2n-1 + 2\sum^{\frac{n-1}{2}}_{i=1} 2(2n - 2i -2) \ &= \ 2n-1 + 8\left(n \cdot \frac{n-1}{2} - \frac{n-1}{2} - \sum^{\frac{n-1}{2}}_{i=1} i\right) \notag \\ \notag
    &= \ 2n - 1 + 4n^2 -4n - 4n + 4 - 8\frac{\frac{n-1}{2}\cdot \frac{n+1}{2}}{2} \\
    &= \ 3n^2 - 6n + 4.
\end{align}
If we assume that all pieces see this maximal number of spaces, by Lemma \ref{lemma: comboliminf}, this gives us an expected percentage of safe spaces of
\be \lim_{n \to \infty}\frac{\binom{n^3- 3n^{2}+6n-4}  {n}}{\binom{n^3} {n}} \ = \ \frac{1}{e^3}.\ee \par
\vspace{.25cm}

We obtain a lower bound by considering the weakest possible placements, with all bishops placed in corners. A hyper-bishop placed in one of the lower corners sees $n$ spaces on the lowest slice. For the $i$\textsuperscript{th} slice above, two of the attacking diagonals shift $i$ spaces outward, while one of the attacking diagonals shifts $i$ spaces inward, meaning that it sees $2(n-i+1) + (i-2)$ spaces, as demonstrated in Figure \ref{fig:corbisimage}. Just as above, there are $n$ vertical slices. Hence, the total number of spaces seen is

\begin{equation}
    n+ \sum_{i=1}^{n-1}(2(n-i)+(i-1)) \ = \ \frac{3}{2}(n^2-n)+1.
\end{equation}

\noindent Therefore, by Lemma \ref{lemma: comboliminf}, our upper bound is now
\be \lim_{n \to \infty}\frac{\binom{n^3- 3(n^2 - n)/2 - 1} {n}}{\binom{n^3}{n}} \ = \ \frac{1}{e^{3/2}}.\ee \par
\vspace{.25cm}

Note that both bounds are dependent on the definition of rings from the second dimension. Because the lack of symmetry has prevented us from defining rings for hyper-bishops in $k$ dimensions, finding general bounds for all dimensions proves challenging. We do note a striking similarity between these bounds and our naive bounds in the $2$-dimensional case: both follow the pattern of $1/e^k$ and $1/e^{k/2}$ for the bounds. We are optimistic about future work using this definition.\par
\vspace{.25cm}

\subsection{Hyper-queens}

Our definition for hyper-queens is once again that of a bishop and a rook placed in the same space, though in this case using the hyper-piece version of both. Unfortunately, our current understanding of hyper-bishops is not sufficient to obtain strong results.
We apply our current bounds on hyper-bishops, ignoring the overlap of the rook and bishop given that their seen spaces are mostly disjoint. This gives us an upper bound on spaces seen of $6n^2 - 9n$ and a lower bound of $\frac{9}{2}(n^2 - n)$. This gives bounds on the proportion of safe spaces in the limit, namely
\begin{equation}
\lim_{n \to \infty}\frac{\binom{n^3- 9(n^2 - n)/2}{n}}{\binom{n^3}{n}} \ = \ \frac{1}{e^{9/2}}
\ \\ \quad\rm{ and } \quad\ \\
\lim_{n \to \infty}\frac{\binom{n^3- 6n^{2}+9n} {n}}{\binom{n^3}{n}} \ = \ \frac{1}{e^6},
\end{equation}
both of which follow from applications of Lemma \ref{lemma: comboliminf}.
 \section{Line-Pieces in Higher Dimensions}
It is also possible to consider pieces that move linearly in higher dimensions. We provide a broad definition of such pieces below.
\begin{defi}[Higher dimensional line-pieces]\label{LPdefi}
For any dimension $k > 1$, a $k$-dimensional line-piece moves in perpendicular planes as it does on a $2$-dimensional chessboard. It can move in any $2$-dimensional plane, with all other coordinates being held constant. When determining the expected number of safe spaces, we place $n^{k-1}$ line-pieces.
\end{defi}
Here we place $n^{k-1}$ line-pieces because the number of pieces required to dominate the board is of order $n^{k-1}$. If we were to place some $dn^{k-2}$ line-pieces, then each attacks some $a \cdot n$ spaces where $a$ is a constant factor, and when $n$ tends to infinity, $dan^{k-1}$ never covers a positive proportion of all $n^k$ spaces.\par
\vspace{.25cm}

\subsection{Line-Rooks}
Here, we both refine our definition of line-rooks, and aim to show Theorem \ref{rooksAvg}, though the proof is almost entirely handled by Lemma \ref{lemma: comboliminf}. The remainder is simply counting the number of spaces a line-rook may attack.

\begin{defi}[Line-Rooks]\label{lineRooksDefi}
    A line-rook attacks any space that shares $n^{k-1}$ planes with it, which is equivalent to having all but one coordinate be equal.
\end{defi}
As an example, we show the movement of a line-rook placed at $(3,3,3)$ on a $5 \times 5 \times 5$ board in Figure \ref{fig:linerook}. The line-rook moves only along the bolded lines. \par
\vspace{.25cm}

\begin{figure}[h!]
    \centering
\begin{tikzpicture}[scale=.5]

	\draw[very thick] (0,5,0) -- (0,5,5);
	\draw[very thick] (5,0,0) -- (5,0,5);
	\draw[very thick] (5,5,0) -- (5,5,5);
	\draw[very thick] (0,5,0) -- (5,5,0);
	\draw[very thick] (5,0,0) -- (5,5,0);
 \draw[very thick] (0,0,0) -- (5,0,0);
 \draw[very thick] (0,0,0) -- (0,5,0);
 \draw[very thick] (0,0,0) -- (0,0,5);
     \draw[very thick]	(0,0,5) -- (5,0,5) -- (5,5,5) -- (0,5,5) -- cycle;

\foreach \y in {1,3,5}{	
 \foreach \x in {1,3,5}{
         \draw[opacity=.25, fill = black] (\x-1,\y-1,5) -- (\x,\y-1,5) -- (\x,\y,5) -- (\x-1,\y,5) -- cycle;
    }
    }
\foreach \y in {2,4}{	
 \foreach \x in {2,4}{
         \draw[opacity=.25, fill = black] (\x-1,\y-1,5) -- (\x,\y-1,5) -- (\x,\y,5) -- (\x-1,\y,5) -- cycle;
    }
    }

\foreach \y in {1,3,5}{	
 \foreach \x in {1,3,5}{
         \draw[opacity=.25, fill = black] (\x-1,5,\y-1) -- (\x,5,\y-1) -- (\x,5,\y) -- (\x-1,5,\y) -- cycle;
    }
    }
\foreach \y in {2,4}{	
 \foreach \x in {2,4}{
         \draw[opacity=.25, fill = black] (\x-1,5,\y-1) -- (\x,5,\y-1) -- (\x,5,\y) -- (\x-1,5,\y) -- cycle;
    }
    }
		
\foreach \y in {1,3,5}{	
 \foreach \x in {1,3,5}{
         \draw[opacity=.25, fill = black] (5,\x-1,\y-1) -- (5,\x,\y-1) -- (5,\x,\y) -- (5,\x-1,\y) -- cycle;
    }
    }
\foreach \y in {2,4}{	
 \foreach \x in {2,4}{
         \draw[opacity=.25, fill = black] (5,\x-1,\y-1) -- (5,\x,\y-1) -- (5,\x,\y) -- (5,\x-1,\y) -- cycle;
    }
    }

    \draw[->, teal, ultra thick] (2.5,2.5,2.5) -- (5,2.5,2.5) node[anchor=north]{};
    \draw[->, teal, ultra thick] (2.5,2.5,2.5) -- (0,2.5,2.5) node[anchor=south west]{};
     \draw[->, orange, ultra thick] (2.5,2.5,2.5) -- (2.5,0,2.5) node[anchor=north east]{};
     \draw[->, orange, ultra thick] (2.5,2.5,2.5) -- (2.5,5,2.5) node[anchor=north east]{};
     \draw[->, blue, ultra thick] (2.5,2.5,2.5) -- (2.5,2.5,5) node[anchor=north east]{};
     \draw[->, blue, ultra thick] (2.5,2.5,2.5) -- (2.5,2.5,0) node[anchor=north east]{};

 \filldraw[opacity=.1, fill = orange] (2.5,0,0)--(2.5,0,5)--(2.5,5,5) -- (2.5,5,0)--(2.5,0,0);
\filldraw[opacity=.1, fill = teal] (0,0,2.5) -- (0,5,2.5) -- (5,5,2.5)--(5,0,2.5)--(0,0,2.5);
\filldraw[opacity=.1, fill = blue] (0,2.5,0) -- (0,2.5,5) -- (5,2.5,5)--(5,2.5,0)--(0,2.5,0);
   \draw[dashed, orange,  thick] (2.5,0,0) -- (2.5,0,5);
	\draw[dashed, orange,  thick] (2.5,5,0) -- (2.5,5,5);
 \draw[dashed, orange,  thick] (2.5,0,0) -- (2.5,5,0);
	\draw[dashed, orange,  thick] (2.5,0,5) -- (2.5,5,5);
 \draw[dashed, teal,  thick] (0,0,2.5) -- (0,5,2.5);
 \draw[dashed, teal,  thick] (5,0,2.5) -- (5,5,2.5);
  \draw[dashed, teal,  thick] (0,5,2.5) -- (5,5,2.5);
    \draw[dashed, teal,  thick] (0,0,2.5) -- (5,0,2.5);
    \draw[dashed, blue,  thick] (0,2.5,0) -- (0,2.5,5);
 \draw[dashed, blue,  thick] (0,2.5,5) -- (5,2.5,5);
  \draw[dashed, blue,  thick] (5,2.5,5)--(5,2.5,0);
    \draw[dashed, blue,  thick] (5,2.5,0)--(0,2.5,0);

\end{tikzpicture}
\caption{Movement of a line-rook placed at $(3,3,3)$ on a $5 \times 5 \times 5$ chessboard.}
\label{fig:linerook}
\end{figure}

For any $(i_1, i_2,\ldots, i_k)$, there are $kn-c$ possible rook placements that attack it, noting that the constant $c$ term from overlap becomes negligible as $n$ approaches infinity. We place $n^{k-1}$ pieces and then divide by the total possible number of board combinations. We use Lemma \ref{lemma: comboliminf} to evaluate \be \lim_{n \to \infty} {\binom{n^k- kn+c} {n^{k-1}}}
\bigg/ {\binom{n^{k}} {n^{k-1}}} \ = \ \frac{1}{e^k}.\ee
\par
\vspace{.25cm}

Note that because of overlap, it does not take all $n^{k-1}$ line-rooks to dominate a $n^{k}$ board. As shown in \cite{Eng97}, for $k=3$, only $n^2/2$ rooks are needed to dominate an $n \times n \times n$ chessboard, disregarding parity as $n$ tends to infinity. With this in mind, we can consider an alternate limit in $3$ dimensions, where instead of placing $n^{2}$ line-rooks, we place $n^{2}/2$ rooks. By Lemma \ref{lemma: comboliminf}, this becomes
\be \lim_{n \to \infty} {\binom{n^3- 3n+c} {n^{2}/2}} \bigg/ {\binom{n^{3}} {n^{2}/2}} \ = \ \frac{1}{e^{3/2}}. \ee

The number of rooks needed to dominate a general $k$-dimensional board remains one of many fascinatng open problems in the world of combinatorics and chess.\par
\vspace{.25cm}
\subsection{Line-Bishops} For line-bishops, we note that a two dimensional line-bishop at $(i,j)$ attacks any $(i\pm c,  j\pm c)$. To extend this to $k$ dimensions, we use the following definition.
\begin{defi}[Line-Bishops in $k$ dimensions]\label{lineBishDefi}
    In $k$ dimensions, a $k$-dimensional line-bishop attacks as a 2-dimensional bishop inside any 2-dimensional plane it resides in, and does not attack any other spaces.
\end{defi}
In $3$ dimensions, this is equivalent to attacking along $6$ lines, $2$ for each of the $xy$, $xz$, and $xy$ planes the bishop lies within. Note that this definition maintains parity, as every space on a $k$-dimensional chessboard is an alternating color, so then every space on any $2$-dimensional board within the larger board must also alternate color.\par
\vspace{.25cm}

If we consider all planes that the line-bishop attacks on a $k$-dimensional chessboard, we find that we must take all combinations of $2$ dimensions out of the total $k$ dimensions. Hence, there are $(k^2-k)/2$ two-dimensional planes for a line-bishop to attack along. As we have defined it, a line-bishop placed in the center attacks as a regular bishop attacking $2n-1$ spaces in each of these $(k^2-k)/2$ planes, for a total of $(k^2-k)n - (k^2-k)/2$ spaces attacked. Because $(k^2-k)/2$ is dwarfed as $n$ approaches infinity, we consider the central bishop as attacking $(k^2-k)n$ spaces.\par
\vspace{.25cm}

We now extend our previous concept of rings to $k$ dimensions, to help analyze the spaces seen by any line-bishop. We begin with our concept of rings in the the second dimension which we call $r_2$. As established earlier, for every increase of one to $r_2$, a bishop attacks two fewer spaces. We then define $k-2$ dimensions of rings from $r_3$ to $r_k$, with any $r_i$ ring existing in an $i$-dimensional subspace of the chessboard. Just as with $r_2$, within each $i$-dimensional subspace there are $\lfloor n/2 \rfloor$ values for $r_i$, and in $k$ dimensions, a piece on a board has some value for each $r_2,r_3,..., r_k$. The value of any $r_i$ is determined by the piece's distance from the center point of some $i$-dimensional subspace of the board; if there are multiple subspaces with $i$ dimensions, we define $r_i$ to be the innermost ring, so it takes the minimal value over all $i$-dimensional subspaces. This ensures that bishops placed centrally are more powerful than bishops placed in the outer rings.\par
\vspace{.25cm}
As an example, for a three-dimensional chessboard, a line-bishop placed at the center space along one of the faces of an $n \times n \times n$ chessboard, as depicted in Figure \ref{fig:linebish}, would have $r_2=0$ and $r_3=\lfloor n/2 \rfloor$. Although the bishop is placed in the outermost ring for the $xz$ and $yz$ planes, defining $r_2=0$ ensures that the bishop attacks the correct amount of spaces in the $xy$ plane.\par
\begin{figure}[ht]
    \centering
\begin{tikzpicture}[scale=.5]

	\draw[very thick] (0,5,0) -- (0,5,5);
	\draw[very thick] (5,0,0) -- (5,0,5);
	\draw[very thick] (5,5,0) -- (5,5,5);
	\draw[very thick] (0,5,0) -- (5,5,0);
	\draw[very thick] (5,0,0) -- (5,5,0);
 \draw[very thick] (0,0,0) -- (5,0,0);
 \draw[very thick] (0,0,0) -- (0,5,0);
 \draw[very thick] (0,0,0) -- (0,0,5);
     \draw[very thick]	(0,0,5) -- (5,0,5) -- (5,5,5) -- (0,5,5) -- cycle;

\foreach \y in {1,3,5}{	
 \foreach \x in {1,3,5}{
         \draw[opacity=.25, fill = black] (\x-1,\y-1,5) -- (\x,\y-1,5) -- (\x,\y,5) -- (\x-1,\y,5) -- cycle;
    }
    }
\foreach \y in {2,4}{	
 \foreach \x in {2,4}{
         \draw[opacity=.25, fill = black] (\x-1,\y-1,5) -- (\x,\y-1,5) -- (\x,\y,5) -- (\x-1,\y,5) -- cycle;
    }
    }

\foreach \y in {1,3,5}{	
 \foreach \x in {1,3,5}{
         \draw[opacity=.25, fill = black] (\x-1,5,\y-1) -- (\x,5,\y-1) -- (\x,5,\y) -- (\x-1,5,\y) -- cycle;
    }
    }
\foreach \y in {2,4}{	
 \foreach \x in {2,4}{
         \draw[opacity=.25, fill = black] (\x-1,5,\y-1) -- (\x,5,\y-1) -- (\x,5,\y) -- (\x-1,5,\y) -- cycle;
    }
    }
		
\foreach \y in {1,3,5}{	
 \foreach \x in {1,3,5}{
         \draw[opacity=.25, fill = black] (5,\x-1,\y-1) -- (5,\x,\y-1) -- (5,\x,\y) -- (5,\x-1,\y) -- cycle;
    }
    }
\foreach \y in {2,4}{	
 \foreach \x in {2,4}{
         \draw[opacity=.25, fill = black] (5,\x-1,\y-1) -- (5,\x,\y-1) -- (5,\x,\y) -- (5,\x-1,\y) -- cycle;
    }
    }
\filldraw[opacity=.1, fill = orange] (2.5,0,0)--(2.5,0,5)--(2.5,5,5) -- (2.5,5,0)--(2.5,0,0);
\filldraw[opacity=.1, fill = teal] (0,0,2.5) -- (0,5,2.5) -- (5,5,2.5)--(5,0,2.5)--(0,0,2.5);
\filldraw[opacity=.1, fill = blue] (0,5,0) -- (0,5,5) -- (5,5,5)--(5,5,0)--(0,5,0);
   \draw[dashed, orange,  thick] (2.5,0,0) -- (2.5,0,5);
	\draw[dashed, orange,  thick] (2.5,5,0) -- (2.5,5,5);
 \draw[dashed, orange,  thick] (2.5,0,0) -- (2.5,5,0);
	\draw[dashed, orange,  thick] (2.5,0,5) -- (2.5,5,5);
 \draw[dashed, teal,  thick] (0,0,2.5) -- (0,5,2.5);
 \draw[dashed, teal,  thick] (5,0,2.5) -- (5,5,2.5);
  \draw[dashed, teal,  thick] (0,5,2.5) -- (5,5,2.5);
    \draw[dashed, teal,  thick] (0,0,2.5) -- (5,0,2.5);

    \draw[->, blue, ultra thick] (2.5,5,2.5) -- (0,5,0) node[anchor=north]{};
    \draw[->, blue, ultra thick] (2.5,5,2.5) -- (0,5,5) node[anchor=north]{};
    \draw[->, blue, ultra thick] (2.5,5,2.5) -- (5,5,0) node[anchor=north]{};
    \draw[->, blue, ultra thick] (2.5,5,2.5) -- (5,5,5) node[anchor=north]{};
    \draw[->, teal, ultra thick] (2.5,5,2.5) -- (5,2.5,2.5) node[anchor=north]{};
    \draw[->, teal, ultra thick] (2.5,5,2.5) -- (0,2.5,2.5) node[anchor=north]{};
    \draw[->, orange, ultra thick] (2.5,5,2.5) -- (2.5,2.5,0) node[anchor=north]{};
    \draw[->, orange, ultra thick] (2.5,5,2.5) -- (2.5,2.5,5) node[anchor=north]{};

\end{tikzpicture}
\caption{Movement of a line-bishop placed at $(3,3,5)$ on a $5 \times 5 \times 5$ chessboard. Here, $r_2=0$ and $r_3=2$.}
\label{fig:linebish}
\end{figure}
\vspace{.25cm}
For each movement out in the $r_i$ ring, the line-bishop attacks $2(i-1)$ fewer pieces. A piece moved outwards along some $c$ axis in the $r_i$ ring will move outward in all of the $i-1$ planes that include $c$ as an axis, and as shown in the case for $2$ dimensions, will attack $2$ less spaces in each of these planes for each movement outward. 

Then, for any bishop in rings $(r_2, \ldots, r_k)$, we must subtract the following sum:
\be s \ := \ 2\sum_{i=2}^{k}(i-1)r_i. \ee

For the sake of concise notation, we refer to this sum as $s$ for the remainder of the line-bishop and line-queen proofs.\par
\vspace{.25cm}

As a check, we note that a bishop in the corner has value $n/2$ for all $r$, meaning it attacks, $(k^2-k)n- n/2(2\sum_{i=2}^{k}(i-1))=(k^2-k)n- n(k^2-k)/2=n(k^2-k)/2$ spaces. This is as expected, because a bishop placed in the corner should attack $n$ spaces for all of the $(k^2-k)/2$ planes it resides in. \par
\vspace{.25cm}

For a $k$-dimensional chess board, there are $2^{k-3}(k-1)k$ two-dimensional faces, see for example \cite{Ban96}. As shown in the $2$-dimensional bishop case, there are $8r_2$ spaces for the $r_2$\textsuperscript{th} ring. We define the expected percentage of safe spaces on a chessboard in $k$ dimensions as

\be \mu_k \ = \ \frac{1}{n^k}\sum_{r_k=0}^{n/2}\sum_{r_{k-1}=0}^{r_k}\cdots\sum_{r_2=0}^{r_3}2^{k-3}(k-1)k8r_2\frac{{\binom{n^k- (k^2-k-\frac{s}{n})n}{n^{k-1}}}}{\binom{n^k} {n^{k-1}}}. \ee

\noindent As $n$ tends to infinity, we can apply Lemma \ref{lemma: comboliminf} to see this is \be \mu_k \ = \ \lim_{n \to \infty}\frac{1}{n^k}\sum_{r_k=0}^{n/2}\sum_{r_{k-1}=0}^{r_k}\cdots\sum_{r_2=0}^{r_3}2^{k}(k-1)kr_2\cdot e^{-k^2+k+\frac{s}{n}}. \ee 

\noindent Again, we can interpret this as an iterated Riemann sum, with each $x_k=r_k/n$, allowing us to simplify this expression to be 
\be \mu_k \ = \ 2^k \cdot e^{k^2-k} \cdot k(k-1) \cdot \int_{0 \leq x_1 < \ldots < x_k \leq 1/2} \exp{(-2\sum_{i=1}^k(k-i)x_i)} dx_1 \ldots dx_k\ee 

As an example, we define and computationally determine the expected percentage of safe spaces as $n$ approaches infinity for $k=3$ below:
\begin{align}
    &\lim_{n \to \infty}\frac{1}{n^3}\sum_{r_3=0}^{n/2}\sum_{r_2=0}^{r_3}48r_2e^{-6+\frac{4r_3+2r_2}{n}}
    \ = \ \frac{-1+9e^2-2e^3}{3e^6} \ \approx \ 2.0929 \%.
\end{align}

We notice an interesting phenomenon here. Observe that in three dimensions, a centrally placed bishop attacks $6n-5$ spaces, as opposed to a rook, which only attacks $3n-2$ spaces. A bishop placed in the corner also attacks $3n-2$ spaces, meaning that in three dimensions, any given line-bishop is at least as powerful as a line-rook. We verify this by evaluating the line-rooks limit in three dimensions, noticing that they leave $\approx 4.9787\%$ of the board safe, compared to the $\approx 2.0929\%$ safe for line-bishops. Moreover, for all $k>2$, line-bishops become stronger than line-rooks. Given that line-bishops change in two coordinates as they move, while line-rooks change only in one, the number of diagonals for a bishops to attack along increases as a quadratic, while the number of straight paths for a rook to attack along only increases linearly.

\subsection{Line-Queens}
Again, the rook movement and the bishop movement of a queen are completely disjoint. Therefore, we keep the definition of rings from the line-bishop definition, and add the $kn$ pieces attacked by the rook movement, resulting in a limit of
\begin{align}
    L &\ := \ \lim_{n \to \infty}\frac{1}{n^k}\sum_{r_k=0}^{n/2}\sum_{r_{k-1}=0}^{r_k}...\sum_{r_2=0}^{r_3}2^{k}(k-1)kr_2\frac{\binom{n^k- (k + k^2-k-\frac{s}{n})n} {n^{k-1}}}{\binom{n^k} {n^{k-1}}} \\
\end{align}

When we apply Lemma 2.1, we find that the additional $kn$ attacked spaces contribute a factor of $1/e^k$. Then, for line-queens, the expected percentage of safe spaces is simply the expected proportion of safe spaces for a line-bishop divided by $e^{k}$. We show $k=3$ as an example below:

\begin{align}
    &\lim_{n \to \infty}\frac{1}{n^3}\sum_{r_3=1}^{n/2}\sum_{r_2=1}^{r_3}48r_2e^{-9+\frac{4r_3+2r_2}{n}} \ = \ \frac{-1+9e^2-2e^3}{3e^9} \ \approx \ 0.1042 \%.
\end{align}

\subsection{Variance of Line-Pieces} We proved earlier that variance always converges to $0$ as $n$ approaches infinity. Then the expected proportions of safe spaces converge to the values stated for all pieces above.
\section{Future work}
It is interesting that our naive bounds for hyper-bishops in $3$ dimensions, which bound the expected proportion of safe spaces between $1/e^3$ and $1/e^{3/2}$, are reminiscent of our naive bounding for $2$ dimensional bishops, which were between $1/e^2$ and $1/e$. While precisely counting the number of spaces attacked is not immediately feasible, providing stronger bounds or bounds for $k$ dimensions would be an important step in further understanding the movement of these pieces. \par
\vspace{.25cm}

This project also lends itself to multiple generalizations. We note here that the appearance of an $e^{-k}$ term is reminiscent of a Poisson distribution. While our work here focuses on the probability of a space being attacked by $0$ pieces, it would be worthwhile to consider whether a Poisson distribution could be used to determine the probability of a space being attacked by $x$ pieces. Another possible generalization to consider is the hybrid case, in which some combination of $r$ rooks, $q$ queens and $b$ bishops are placed on a $k$-dimensional chessboard with side length $n$ so that $r+q+b=n^{k-1}$. We note lastly that other possible definitions of hyper-pieces in $k$ dimensions may be possible, and may lend themselves to an easier analysis. \par \vspace{.25cm}

\end{document}